\def\NZQ{\mathbb}               
\def\ZZ{{\NZQ Z}}
\def\RR{{\NZQ R}}
\def\frk{\mathfrak}               
\def\Phi{{\frk N}}
\def\ab{{\mathbf a}}
\def\eb{{\mathbf e}}
\def\xb{{\mathbf x}}
\def\yb{{\mathbf y}}
\def\opn#1#2{\def#1{\operatorname{#2}}} 
\opn\gr{gr}
\def\Ac{{\mathcal A}}
\def\Bc{{\mathcal B}}
\def\Hc{{\mathcal H}}
\def\Sc{{\mathcal S}}
\def\Pc{{\mathcal P}}
\def\Rc{{\mathcal R}}
\def\Vol{{\textnormal{Vol}}}
\newtheorem{theorem}{Theorem}[section]
\newtheorem{lemma}[theorem]{Lemma}
\newtheorem{corollary}[theorem]{Corollary}
\newtheorem{proposition}[theorem]{Proposition}
\theoremstyle{definition}
\newtheorem{remark}[theorem]{Remark}
\newtheorem{example}[theorem]{Example}
\newtheorem{problem}[theorem]{Problem}
\newtheorem{conjecture}[theorem]{Conjecture}
\let\epsilon\varepsilon
\let\phi=\varphi
\let\kappa=\varkappa
\opn\dis{dis}
\opn\height{height}
\opn\dist{dist}
\def\pnt{{\raise0.5mm\hbox{\large\bf.}}}
\opn\Lex{Lex}
\opn\conv{conv}
\begin{document}

\title{Symmetric edge polytopes and matching generating polynomials}
\author{Hidefumi Ohsugi and Akiyoshi Tsuchiya}
\address{Hidefumi Ohsugi,
	Department of Mathematical Sciences,
	School of Science,
	Kwansei Gakuin University,
	Sanda, Hyogo 669-1337, Japan} 
\email{ohsugi@kwansei.ac.jp}

\address{Akiyoshi Tsuchiya,
Graduate school of Mathematical Sciences,
University of Tokyo,
Komaba, Meguro-ku, Tokyo 153-8914, Japan} 
\email{akiyoshi@ms.u-tokyo.ac.jp}

%

\subjclass[2010]{05A15, 05C31, 13P10, 52B12, 52B20}
\keywords{symmetric edge polytope, $h^*$-polynomial, interior polynomial, matching generating polynomial, $\mu$-polynomial, real-rooted, $\gamma$-positive}

\begin{abstract}
 Symmetric edge polytopes $\mathcal{A}_G$ of type A are lattice polytopes arising from the root system $A_n$ and finite simple graphs $G$. 
There is a connection between $\mathcal{A}_G$ and the Kuramoto synchronization model in physics. 
In particular, the normalized volume of $\mathcal{A}_G$ plays a central role. 
In the present paper, we focus on a particular class of graphs.
In fact, for any cactus graph $G$, we give a formula for the $h^*$-polynomial of $\mathcal{A}_{\widehat{G}}$ by using matching generating polynomials, where $\widehat{G}$ is the suspension of $G$.  
This gives also a formula for the normalized volume of $\mathcal{A}_{\widehat{G}}$.
Moreover, via methods from chemical graph theory, we show that for any cactus graph $G$, the $h^*$-polynomial of $\mathcal{A}_{\widehat{G}}$ is real-rooted.
 Finally, we extend the discussion to symmetric edge polytopes of type $B$, which are lattice polytopes arising from the root system $B_n$ and finite simple graphs.\\
\end{abstract}

\maketitle

\section{Introduction}
A \textit{lattice polytope} $\Pc \subset \RR^n$ is a convex polytope all of whose vertices have integer coordinates. 
Many lattice polytopes arising from graphs are constructed and have been studied from several viewpoints. 

In \cite{CSC}, lattice polytopes arising from the root system of type $A_n$ and finite graphs were introduced. Let $G$ be a finite simple undirected graph on the vertex set $[n]:=\{1,\ldots,n\}$ with the edge set $E(G)$. The \textit{symmetric edge polytope $\Ac_G$} (\textit{of type $A$)} of $G$ is the lattice polytope which is the convex hull of
\[
 \{ \pm (\eb_i - \eb_j) : \{i,j\} \in E(G) \},
\]
where $\eb_i$ is the $i$-th unit coordinate vector in $\RR^n$. 
Symmetric edge polytopes, in particular, their Ehrhart polynomials were studied from a viewpoint of algebraic number theory. 
In fact, the Ehrhart polynomials of special symmetric edge polytopes have properties similar to Riemann's $\zeta$ function \cite{localRiemann,zeroroot}. 
Moreover, many results about zero loci of the Ehrhart polynomials of symmetric edge polytopes have been found from a viewpoint of algebraic combinatorics 
\cite{interlacingEhrhart,fivemen,OS}. 
On the other hand, symmetric edge polytopes are always reflexive polytopes, i.e., their dual polytopes are also lattice polytopes. 
Reflexive polytopes correspond to Gorenstein toric Fano varieties and they   give many explicit constructions of mirrors of Calabi--Yau hypersurfaces \cite{mirror}.
By a work of Hibi \cite{hibi}, it follows that their $h^*$-polynomials of reflexive polytopes are always palindromic.
One of current hot topics on the study of lattice polytopes is unimodality questions for $h^*$-polynomials.
Since any symmetric edge polytope has a regular unimodular triangulation, its $h^*$-polynomial is unimodal due to a result of Bruns and R\"{o}mer \cite{BR}.
Moreover, the $h^*$-polynomial of $\Ac_G$ coincides with the $h$-polynomial of a unimodular triangulation of the boundary $\partial \Ac_G$.
From a viewpoint of algebraic and topological combinatorics, Gal \cite{Gal} conjectured that the $h$-polynomial of a flag triangulation of a sphere is $\gamma$-positive, which directly implies the unimodality.
More strongly, Nevo and Petersen conjectured that the $\gamma$-polynomial of the $h$-polynomial of a flag triangulation of a sphere coincides with the $f$-polynomial of a balanced simplicial complex \cite{NP}.
In \cite{HJMsymmetric,OTlocally}, it is shown that the $h^*$-polynomials of the symmetric edge polytopes of certain classes of graphs are $\gamma$-positive.
On the other hand, the $h^*$-polynomial of $\Ac_G$ is not always real-rooted (Example \ref{ex:nonreal}). 
Note that if a polynomial all of whose coefficients are positive is palindromic and real-rooted, then it is $\gamma$-positive and log-concave.

Recently, the normalized volumes of symmetric edge polytopes have attracted much attention.
The author of \cite{Chen} calls symmetric edge polytopes \textit{adjacency polytopes} and refer to the normalized volumes as the \textit{adjacency polytope bounds}.
Adjacency polytopes appeared in the context of the Kuramoto model, describing the behavior of interacting oscillators \cite{Kuramoto}. 
There are many applications of the Kuramoto model in several fields of study in biology, physics, chemistry, engineering, and social science.
In many cases, an adjacency polytope bound gives an upper bound of the number of possible solutions in the Kuramoto equations \cite{Chen}. In \cite{CDM, DDM, OS}, explicit formulas of the adjacency polytope bounds, i.e., the normalized volumes of the symmetric edge polytopes of certain classes of graphs are given. 
Moreover, in \cite{DC}, another type of adjacency polytope bounds is discussed.

In the present paper, from the above background we investigate the normalized volume and the $\gamma$-polynomial of the $h^*$-polynomial of a symmetric edge polytope.
In particular, we focus on the suspension $\widehat{G}$ of a graph $G$.
Here $\widehat{G}$ is the graph on the vertex set $[n+1]$ and the edge set
$E(G) \cup \{ \{i,n+1 \} : i \in [n]\}$.
In \cite{OTlocally}, a formula of the $\gamma$-polynomial of the $h^*$-polynomial of $\Ac_{\widehat{G}}$ by using interior polynomials, which are a version of the Tutte polynomials for hypergraphs introduced by K\'alm\'an \cite{interior}, is given. Moreover, the $h^*$-polynomial of $\Ac_{\widehat{G}}$ is always $\gamma$-positive. Furthermore, this formula also gives a formula of the normalized volume of $\Ac_{\widehat{G}}$.

Our first main theorem is another formula of the $\gamma$-polynomial of the $h^*$-polynomial of $\Ac_{\widehat{G}}$ of a certain class of graphs $G$ by using matching generating polynomials.
A \textit{cactus graph} is a graph $G$ such that each edge of $G$ belongs to at most one cycle of $G$.

\begin{theorem}
\label{formulaA}
Let $G$ be a cactus graph on $[n]$ and let $g(G,x)$ be the matching generating polynomial of $G$.
Then the $\gamma$-polynomial of the $h^*$-polynomial of $\Ac_{\widehat{G}}$ is 
$$
g(G,2x) + \sum_{R \in {{\mathcal R}_2'}(G)} (-2)^{c(R)} g (G-R , 2x) \ 
x^{\frac{|E(R)|}{2} },
$$
where ${\mathcal R}_2' (G)$ is the set of all subgraphs of $G$ 
consisting of vertex-disjoint even cycles,
and $c(R)$ is the number of the cycles of $R$.
	Moreover, the normalized volume of ${\mathcal A}_{\widehat{G}}$ is 
$$
2^n g (G,  1/2) + \sum_{R \in {{\mathcal R}_2'}(G)} (-2)^{c(R)} 2^{n-|E(R)|} g (G-R ,1/2).
$$
\end{theorem}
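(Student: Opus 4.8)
The plan is to prove the formula for the $\gamma$-polynomial first; the normalized volume formula then follows by a routine substitution. Since $\widehat{G}$ is connected on $n+1$ vertices, $\Ac_{\widehat{G}}$ is a reflexive polytope of dimension $n$, so its normalized volume is $h^{*}(\Ac_{\widehat{G}},1)$, and for any palindromic polynomial of degree $n$ with $\gamma$-polynomial $\gamma(x)$ one has $h^{*}(\Ac_{\widehat{G}},1)=2^{n}\gamma(1/4)$. Plugging $x=1/4$ into the asserted $\gamma$-polynomial and multiplying by $2^{n}$ sends each $g(H,2x)$ to $g(H,1/2)$ and each $x^{|E(R)|/2}$ to $2^{-|E(R)|}$, yielding exactly the stated volume formula (the empty subgraph giving the term $2^{n}g(G,1/2)$). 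So it suffices to prove the first assertion, and I will write $\Phi(G,x)$ for its right-hand side, i.e.\ $\Phi(G,x)=\sum_{R}(-2)^{c(R)}x^{|E(R)|/2}g(G-R,2x)$ where $R$ runs over all (possibly empty) subgraphs of $G$ that are vertex-disjoint unions of even cycles, the empty one contributing $g(G,2x)$.

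I would begin from the formula of \cite{OTlocally} expressing the $\gamma$-polynomial of $h^{*}(\Ac_{\widehat{G}},x)$ as the interior polynomial $I_{G}(x)$ of the hypergraph associated with $\widehat{G}$; the task is then to show $I_{G}(x)=\Phi(G,x)$ for every cactus $G$. Both sides are multiplicative over disjoint unions --- standard for the interior polynomial, and clear for $\Phi$ since matchings and vertex-disjoint unions of even cycles both distribute over connected components --- so I may assume $G$ is connected, and I would induct on $|E(G)|$ using the block structure: a leaf block of a connected cactus with at least one edge is either a pendant edge or a cycle, so $G$ is a single vertex, a single cycle, has a pendant edge, or has a pendant cycle.

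The reductions I would establish for $\Phi$, and then match on the $I_{G}$ side, are these. If $e=\{u,v\}$ is a pendant edge with $v$ a leaf, then $\Phi(G,x)=\Phi(G-v,x)+2x\,\Phi(G-u-v,x)$; this follows by applying the matching recursion $g(H,x)=g(H-e,x)+x\,g(H-u-v,x)$ to $g(G-R,2x)$, splitting according to whether $R$ uses a cycle through $u$, and observing that deleting $v$ or $u$ does not disturb the relevant even-cycle families and that $v$ is isolated in $G-R$ once $u\in V(R)$. If $G$ is not a cycle and $B$ is a pendant cycle of length $k$ attached at the cut vertex $c$, with its other $k-1$ vertices of degree $2$, then, writing $G^{\bullet}$ for $G$ with the interior vertices of $B$ deleted and $G^{\circ}=G^{\bullet}-c$, I claim $\Phi(G,x)=g(P_{k-1},2x)\,\Phi(G^{\bullet},x)+\big(\Phi(C_{k},x)-g(P_{k-1},2x)\big)\,\Phi(G^{\circ},x)$, where $P_{j}$ and $C_{j}$ are the path and the cycle on $j$ vertices; one proves this by sorting the sum for $\Phi(G,x)$ by whether $B\in R$ --- the terms with $B\in R$ contributing precisely the correction $-2x^{k/2}$ when $k$ is even, which is the source of the coefficients $(-2)^{c(R)}$ and exponents $|E(R)|/2$ in the statement --- and, for the terms with $B\notin R$, invoking the pendant-cycle expansion $g(H+B,y)=g(P_{k-1},y)\,g(H,y)+\big(g(C_{k},y)-g(P_{k-1},y)\big)\,g(H-c,y)$ of the matching generating polynomial while tracking whether $R$ removes $c$. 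The base cases are $G$ a single vertex, where $I_{G}=1=\Phi(G,x)$, and $G=C_{k}$, where the claim $I_{C_{k}}(x)=g(C_{k},2x)-2x^{k/2}$ for $k$ even, $I_{C_{k}}(x)=g(C_{k},2x)$ for $k$ odd, amounts to computing the $\gamma$-polynomial of the wheel $\Ac_{\widehat{C_{k}}}$ (for example $\Ac_{\widehat{C_{3}}}=\Ac_{K_{4}}$ has $\gamma$-polynomial $1+6x=g(C_{3},2x)$); one can do this by a further edge-deletion recursion opening the cycle $C_{k}$ into the paths $P_{k}$ and $P_{k-2}$.

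The main obstacle will be transferring the pendant-cycle reduction, and the wheel base case, to the interior polynomial $I_{G}$, and in particular producing the term $-2x^{k/2}$ on that side with the correct sign: interior polynomials do not satisfy naive deletion--contraction, so I expect to need a dedicated study of how the hypergraph of $\widehat{G}$ --- equivalently the bipartite graph used in \cite{OTlocally} --- changes when a hair or a pendant cycle is attached, possibly passing to the opposite side of that bipartite graph via K\'alm\'an's duality $I^{V}=I^{E}$ to make the recursion transparent. The single-cycle case is where the correction originates and must be computed directly. By contrast, the remaining work on the $\Phi$-side --- keeping track of the cut vertex $c$, absorbing the path factors $g(P_{k-1},\cdot)$, and verifying term-by-term agreement of the two recursions --- is routine.
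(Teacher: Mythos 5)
Your reduction of the volume formula to the $\gamma$-polynomial formula and your manipulations on the $\Phi$-side (pendant-edge and pendant-cycle expansions of matching generating polynomials) are fine, but the core of the argument is missing, and it is missing exactly where the content of the theorem lies. First, your starting point misquotes the input from \cite{OTlocally}: the formula proved there (restated as Theorem \ref{thm:suspinterior}) is
\[
\gamma(\Ac_{\widehat{G}},x)=\frac{1}{2^{n-1}}\sum_{H\in\mathrm{Cut}(G)} I_{\widetilde{H}}(4x),
\]
an average over all $2^{n-1}$ cuts $H$ of $G$ of interior polynomials of the auxiliary bipartite graphs $\widetilde{H}$ --- not a single interior polynomial of a hypergraph attached to $\widehat{G}$. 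So ``show $I_{G}(x)=\Phi(G,x)$'' is not a task the literature hands you; you would first have to establish an identity of that shape, and your plan never engages with the cut sum. Second, even granting a target of this form, the steps you yourself flag as the main obstacle --- the pendant-cycle reduction and the cycle (wheel) base case on the interior-polynomial side, i.e.\ precisely the source of the correction terms $(-2)^{c(R)}x^{|E(R)|/2}$ --- are not proved; you only say you expect to need a dedicated study, possibly via K\'alm\'an's duality. Since, as you note, interior polynomials satisfy no naive deletion--contraction, and $\gamma$-polynomials of symmetric edge polytopes have no obvious edge-deletion recursion either, this is not a routine verification but the heart of the matter, so the proposal does not constitute a proof.

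For comparison, the paper avoids induction on the block structure altogether. It combines the cut-sum formula above with Proposition \ref{prop:matchinginterior}, which identifies $I_{\widetilde{H}}(x)$ with $\sum_{k\ge 0}|M(H,k)|\,x^{k}$, the generating function of \emph{vertex sets} of $k$-matchings, and then proves the purely combinatorial Lemma \ref{klemma}: in any graph in which each edge lies in at most one even cycle,
\[
|M(G,k)| = m_k(G) + \sum_{R\in\mathcal{R}_2'(G)}(-1)^{c(R)}\, m_{k-|E(R)|/2}(G-R).
\]
Applying this to every cut $H$ and counting, for a fixed matching $M$ (respectively $M\cup R$), how many cuts contain it ($2^{n-k-1}$, respectively $2^{n-k-1-|E(R)|/2+c(R)}$) yields the theorem in a single computation, with no recursion, and in the greater generality of Theorem \ref{formulaAgen} (each edge in at most one even cycle, not just cactus). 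If you want to salvage your inductive scheme, the missing bridge is exactly this matching interpretation of $I_{\widetilde{H}}$ together with an honest treatment of the sum over cuts; without it, your base case for cycles and your pendant-cycle step remain unproven assertions.
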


In \cite{DDM}, the normalized volume of the symmetric edge polytope of a wheel graph is computed.
Note that a wheel graph is the suspension of a cactus graph.
By using Theorem~\ref{formulaA}, we can give explicit formulas of the normalized volume and the $\gamma$-polynomial of the $h^*$-polynomial of the symmetric edge polytope of a wheel graph (Example~\ref{wheelexample}).  
On the other hand, applying Theorem~\ref{formulaA} to a graph $G$ which has no even cycle, we can describe the $\gamma$-polynomial of the $h^*$-polynomial of $\Ac_{\widehat{G}}$ by a single matching generating polynomial and we know that it is real-rooted (Corollary~ \ref{cor:noeven}). 
Our second main theorem extends the real-rootedness of this result to the suspension of any cactus graph via methods from chemical graph theory.
In fact,

\begin{theorem}
\label{thm:realrooted}
Let $G$ be a cactus graph.
Then the $h^*$-polynomial of $\Ac_{\widehat{G}}$ is real-rooted.
\end{theorem}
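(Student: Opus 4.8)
The plan is first to reduce the statement to the real-rootedness of the $\gamma$-polynomial $\gamma_{\widehat{G}}(x)$ of $h^*(\Ac_{\widehat{G}},x)$, and then to prove that by importing the theory of matching polynomials from chemical graph theory. For the reduction: since $\Ac_{\widehat{G}}$ is reflexive, $h^*(\Ac_{\widehat{G}},x)$ is palindromic, say of degree $d$, and by the main result of \cite{OTlocally} it is $\gamma$-positive; writing $\gamma_{\widehat{G}}$ for its $\gamma$-polynomial, this says that $\gamma_{\widehat{G}}$ has nonnegative coefficients and that $h^*(\Ac_{\widehat{G}},x)=(1+x)^d\,\gamma_{\widehat{G}}\!\big(x/(1+x)^2\big)$. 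Hence, if $\gamma_{\widehat{G}}$ is real-rooted then all of its roots are $\le 0$, and for such a root $\rho$ the preimage equation $x/(1+x)^2=\rho$ becomes $\rho x^2+(2\rho-1)x+\rho=0$, with discriminant $1-4\rho\ge 1>0$; together with the possible root $x=-1$ this forces $h^*(\Ac_{\widehat{G}},x)$ to be real-rooted. So it is enough to prove that the polynomial
\[
\gamma_{\widehat{G}}(x)=g(G,2x)+\sum_{R\in{\mathcal R}_2'(G)}(-2)^{c(R)}\,g(G-R,2x)\,x^{|E(R)|/2}
\]
of Theorem~\ref{formulaA} is real-rooted.

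The tool is the matching polynomial $\mu(H,x):=x^{|V(H)|}g(H,-x^{-2})$, which by the theorem of Heilmann and Lieb (reproved by Godsil via path trees) has only real roots for every graph $H$; equivalently, $g(H,2x)$ has only real, necessarily negative, roots. So when $G$ has no even cycle, ${\mathcal R}_2'(G)=\{\emptyset\}$ and $\gamma_{\widehat{G}}(x)=g(G,2x)$ is already real-rooted (Corollary~\ref{cor:noeven}); and for a single even cycle, writing $g(C_{2m},x)=g(P_{2m},x)+x\,g(P_{2m-2},x)$ and regrouping, the formula becomes $\gamma_{\widehat{C_{2m}}}(x)=g(P_{2m},2x)+2x\big(g(P_{2m-2},2x)-x^{m-1}\big)$, a sum of the real-rooted polynomial $g(P_{2m},2x)$ and $2x$ times $g(P_{2m-2},2x)-x^{m-1}$. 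For a general cactus I would argue by induction on the number of blocks, in the spirit of the interlacing proof of the Heilmann--Lieb theorem: peel off a leaf block $B$ of the block--cut tree --- a bridge, an odd cycle, or an even cycle --- and express $\gamma_{\widehat{G}}$ through the corresponding polynomials of the smaller cacti produced by natural reductions of $B$ (deletion of a pendant edge or vertex when $B$ is a bridge or an odd cycle; the splitting of the sum over $R$ according to whether the even cycle $B$ lies in $R$, which, after the edge-deletion recursion for matching generating polynomials, expresses $\gamma_{\widehat{G}}$ in terms of the $\gamma$-polynomial of a cactus with one fewer even cycle together with a term of the form $2x\big(g(P_{2m-2},2x)-x^{m-1}\big)$ times the $\gamma$-polynomial of $\widehat{G-V(B)}$), carrying along a companion interlacing statement so that the inductive step reduces to the standard fact that a suitable linear combination of real-rooted polynomials with a common interlacer is again real-rooted. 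The conceptual reason to expect this to close up --- and the point of contact with chemical graph theory --- is that, because the cycles of a cactus are pairwise edge-disjoint, one anticipates $\gamma_{\widehat{G}}(x)$ being, up to a monomial and the substitution $2x\,u^2=-1$, the characteristic polynomial $\det(uI-A(G^{*}))$ of the adjacency matrix of an auxiliary \emph{bipartite}, edge-weighted graph $G^{*}$ assembled block-by-block from $G$; the Sachs coefficient theorem expands any such characteristic polynomial exactly in the shape of the displayed formula, and since $A(G^{*})$ is real symmetric, $\det(uI-A(G^{*}))$ is automatically real-rooted.

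The hard part will be the even cycles and their interplay with the odd cycles. In the inductive formulation, the difficulty is to establish the companion interlacing statement when the leaf block removed is an even cycle, since the two pieces of the decomposition of the sum over $R$ carry opposite signs and do not interlace in an evident way --- already one must understand the real-rootedness of the auxiliary factors $g(P_{2m-2},2x)-x^{m-1}$ and how they interlace $g(P_{2m},2x)$, and then track such interlacings across the block--cut tree. In the auxiliary-graph formulation, the difficulty is that reweighting the edges of an even cycle of $G^{*}$ to produce the factor $(-2)^{c(R)}x^{|E(R)|/2}$ simultaneously perturbs every matching generating polynomial $g(G-R,2x)$ occurring in the sum, while each odd-cycle block of $G$ --- which feeds into those matching generating polynomials but never appears as a member of $R$ --- must be replaced in $G^{*}$ by a forest with the same matching generating polynomial, so that $G^{*}$ stays bipartite and contributes no spurious cycle term; and all of these local substitutions must be made coherently along the block--cut tree. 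Handling the odd cycles correctly is, I expect, the crux of the proof; the base cases for the induction are the explicit polynomials above, namely $g(G,2x)$ when $G$ has no even cycle and $g(P_{2m},2x)+2x\big(g(P_{2m-2},2x)-x^{m-1}\big)$ for a single even cycle $C_{2m}$.
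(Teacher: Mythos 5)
Your reduction to the real-rootedness of the $\gamma$-polynomial is fine (it is the standard observation, \cite[Observation 4.2]{EulerianNumbers}, used implicitly in the paper), and your identification of $\gamma(\Ac_{\widehat{G}},x)$ via Theorem \ref{formulaA} and the real-rootedness of $g(H,2x)$ via the matching polynomial are exactly the ingredients the paper also uses. But the core of the theorem --- real-rootedness of
\[
\gamma(G,x)=g(G,2x)+\sum_{R\in{\mathcal R}_2'(G)}(-2)^{c(R)}\,g(G-R,2x)\,x^{|E(R)|/2}
\]
when $G$ has even cycles --- is not proved in your proposal. You handle only the case ${\mathcal R}_2'(G)=\{\emptyset\}$; already for the base case $G=C_{2m}$ your rewriting $g(P_{2m},2x)+2x\bigl(g(P_{2m-2},2x)-x^{m-1}\bigr)$ is not an argument (a sum of a real-rooted polynomial and something else need not be real-rooted without an interlacing statement, which you do not supply), and for general cacti you only sketch two strategies (block-by-block interlacing induction; an auxiliary weighted bipartite graph plus Sachs' theorem) while explicitly deferring the steps you yourself identify as the crux: the signed even-cycle terms and the treatment of odd-cycle blocks. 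As it stands this is a plan, not a proof.

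The paper closes precisely this gap by a short substitution rather than a new induction: it checks that
\[
x^{n}\,\gamma\!\left(G,-\tfrac{1}{2x^{2}}\right)=\mu(G,{\bf t},x),
\]
the Gutman--Polansky $\mu$-polynomial, where $t_i=(-1/2)^{|E(C_i)|/2}$ if $C_i$ is an even cycle and $t_i=0$ otherwise (odd cycles thus enter only through the matching polynomials, which is exactly the point you were struggling to arrange in your auxiliary graph $G^{*}$), and then invokes the known result \cite[Proposition 5]{GP} that for a cactus graph $\mu(G,{\bf t},x)$ is real-rooted whenever $|t_i|\le 1$. In other words, the interlacing machinery you anticipate having to build is already available in the chemical graph theory literature as a property of the $\mu$-polynomial of cacti; your argument would become complete if you replaced your unproven inductive/auxiliary-graph step by this identification and citation (or by an independent proof of the cactus case of the Gutman--Polansky result, which is the genuinely hard content you have not supplied).
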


In the present paper, we also discuss Nevo--Petersen's conjecture for $\Ac_{\widehat{G}}$. More precisely, we show the following.

\begin{theorem}
\label{thm:flag_A}
Let $G$ be a graph which has no even cycles.
Then the $\gamma$-polynomial of $h^*(\Ac_{\widehat{G}},x)$ coincides with the $f$-polynomial of a flag simplicial complex.
\end{theorem}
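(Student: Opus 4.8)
The plan is to combine Theorem~\ref{formulaA} (equivalently, Corollary~\ref{cor:noeven}) with an explicit construction of a flag simplicial complex whose $f$-polynomial is $g(G,2x)$.

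First I would observe that a graph $G$ with no even cycle is automatically a cactus graph all of whose cycles are odd: if some edge of $G$ belonged to two distinct cycles, the block containing that edge would be $2$-connected and not itself a cycle, hence would contain a theta subgraph, i.e.\ two vertices joined by three internally disjoint paths of lengths $a,b,c\ge 1$; among the three cycles of that subgraph, of lengths $a+b$, $b+c$ and $a+c$, at least one is even since $(a+b)+(b+c)+(a+c)=2(a+b+c)$. In particular Theorem~\ref{formulaA} applies to $G$, and since $G$ has no even cycle the index set ${\mathcal R}_2'(G)$ of subgraphs consisting of vertex-disjoint even cycles is empty, so the $\gamma$-polynomial of $h^*(\Ac_{\widehat G},x)$ is exactly $g(G,2x)=\sum_{k\ge 0}2^k m_k(G)\,x^k$, where $m_k(G)$ denotes the number of $k$-matchings of $G$ and $m_0(G)=1$.

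Next I would build the flag complex directly. Let $\Gamma$ be the (ordinary) graph on vertex set $E(G)\times\{1,2\}$ in which $(e,a)$ is adjacent to $(e',a')$ exactly when either $e=e'$ and $a\ne a'$, or $e\ne e'$ and $e,e'$ share an endpoint in $G$; set $\Delta:=\mathrm{Ind}(\Gamma)$, the independence complex of $\Gamma$. Independence complexes of graphs are precisely the flag complexes (their minimal non-faces are exactly the edges of $\Gamma$), so $\Delta$ is flag. A face of $\Delta$ of cardinality $k$ is a set $\{(e_1,a_1),\dots,(e_k,a_k)\}$ with $e_1,\dots,e_k$ pairwise distinct and pairwise nonadjacent in $G$ --- equivalently, a $k$-matching of $G$ together with a label in $\{1,2\}$ attached to each of its edges --- so $\Delta$ has exactly $2^k m_k(G)$ faces of cardinality $k$, the empty face included. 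Hence the $f$-polynomial of $\Delta$ equals $\sum_{k\ge 0}2^k m_k(G)\,x^k=g(G,2x)$, which by the previous paragraph is the $\gamma$-polynomial of $h^*(\Ac_{\widehat G},x)$.

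I do not expect a serious obstacle here. The real content is the reduction to Corollary~\ref{cor:noeven}, namely the remark that ``no even cycle'' forces $G$ to be a cactus all of whose cycles are odd, so that the correction sum over ${\mathcal R}_2'(G)$ in Theorem~\ref{formulaA} disappears; after that, the only things to watch are the elementary bookkeeping of the $f$-vector of $\Delta$ and the fact that the conventions for both the $\gamma$-polynomial and the $f$-polynomial normalize the constant term to $1$ (matching $m_0(G)=1$). One could equivalently describe $\Delta$ as the result of ``doubling'' the vertices of the matching complex of $G$ --- which is itself flag, being the independence complex of the line graph of $G$ --- and the construction above is just the convenient way to see that such doubling preserves flagness.
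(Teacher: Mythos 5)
Your proposal is correct and follows essentially the same route as the paper: reduce via Corollary~\ref{cor:noeven} (your cactus/theta-graph detour is valid but unnecessary, since Theorem~\ref{formulaAgen} applies vacuously when there are no even cycles) to showing that $g(G,2x)$ is the $f$-polynomial of a flag complex. Your explicit complex is exactly the independence complex of the lexicographic product $L(G)[K_2]$, which is precisely what the paper obtains from Propositions~\ref{prop:matchflag} and \ref{fmx}; you simply verify the face count directly instead of invoking Lemma~\ref{lem:independence}.
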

Note that the $h$-polynomial of a flag simplicial complex coincides with that of a balanced simplicial complex \cite{flagcomplex}.
Hence Theorem \ref{thm:flag_A} proves that Nevo--Petersen's conjecture holds for any flag unimodular triangulation of the boundary $\partial\Ac_{\widehat{G}}$ in this case.
We also show that for a forest $G$, $\partial\Ac_{\widehat{G}}$ has a flag unimodular triangulation by using the algebraic technique of Gr\"{o}bner bases from
Theorem \ref{flagtri} (since a forest is a bipartite graph with no cycles).

In \cite{OTinterior}, lattice polytopes arising from the root system of type $B_n$ and finite graphs were introduced. The \textit{symmetric edge polytope $\Bc_G$ of type $B$} of  a graph $G$ on the vertex set $[n]$ is the lattice polytope which is the convex hull of
\[
\{\pm \eb_i: 1 \leq i \leq n\} \cup \{ \pm \eb_i \pm \eb_j : \{i, j \} \in E(G)\}.
\]
Then it follows that $\Bc_G$ is reflexive if and only if $G$ is a bipartite graph. In the case, $\Bc_G$ has a regular unimodular triangulation. Moreover, if $G$ is bipartite, the $\gamma$-polynomial of the $h^*$-polynomial of $\Bc_G$ can be described by an interior polynomial.
Similarly to the case of symmetric edge polytopes of type A,
we give a formula of the $\gamma$-polynomial of the $h^*$-polynomial of $\Bc_G$ for a cactus bipartite graph $G$ by using matching generating polynomials and prove that the $h^*$-polynomial is real-rooted (Theorem \ref{thm:cactusB}).
Moreover, we show that for a forest $G$, the $\gamma$-polynomial of the $h^*$-polynomial of $\Bc_G$ coincides with the $f$-polynomial of a flag simplicial complex (Theorem \ref{thm:NPB}).
Namely, Nevo--Petersen's conjecture holds for any flag unimodular triangulation of the boundary $\partial\Bc_G$ in this case.
We remark that for any forest $G$, $\partial\Bc_G$ has a flag unimodular triangulation (Remark \ref{rem:B}).

The paper is organized as follows: In Section 2, we recall the definition of the $h^*$-polynomials of lattice polytopes, some properties of polynomials and their related conjectures.
In Section 3, we define the interior polynomials of connected bipartite graphs and recall a formula of the $\gamma$-polynomial of the $h^*$-polynomial of $\Ac_{\widehat{G}}$ for a graph $G$ in terms of interior polynomials. We give the proofs of Theorems \ref{formulaA}, \ref{thm:realrooted} and  \ref{thm:flag_A} in Sections 4, 5 and 6 respectively.
Finally, in Section~7, we extend the discussion to symmetric edge polytopes of type $B$.

\subsection*{Acknowledgements}
The authors would like to thank anonymous referees for reading the manuscript carefully.
The authors were partially supported by JSPS KAKENHI 18H01134, 19K14505 and 19J00312.


\section{Ehrhart theory and $\gamma$-polynomials}
\label{sec:Ehrhart}
In this section, we recall the definition of $h^*$-polynomials, the notion of $\gamma$-positivity and its related properties.
Let $\Pc \subset \RR^n$ be a lattice polytope of dimension $d$.
Given a positive integer $t$, we define
$$L_{\Pc}(t)=|t \Pc \cap \ZZ^n|,$$
where $t\Pc:=\{ t \xb \in \RR^n : \xb \in \Pc\}$.
The study on $L_{\Pc}(t)$ originated in Ehrhart \cite{Ehrhart} who proved that $L_{\Pc}(t)$ is a polynomial in $t$ of degree $d$ with the constant term $1$.
We call $L_{\Pc}(t)$ the \textit{Ehrhart polynomial} of $\Pc$.
The generating function of the lattice point enumerator, i.e., the formal power series
$$\text{Ehr}_\Pc(x)=1+\sum\limits_{k=1}^{\infty}L_{\Pc}(k)x^k$$
is called the \textit{Ehrhart series} of $\Pc$.
It is known that it can be expressed as a rational function of the form
$$\text{Ehr}_\Pc(x)=\frac{h^*(\Pc,x)}{(1-x)^{d+1}},$$
where $h^*(\Pc,x)$ is a polynomial in $x$ of degree at most $d$ with nonnegative integer coefficients \cite{Stanleynonnegative} and it
is called
the \textit{$h^*$-polynomial} (or the \textit{$\delta$-polynomial}) of $\Pc$. 
Moreover, $$h^*(\Pc,x)=\sum_{i=0}^{d} h_i^* x^i$$ satisfies $h^*_0=1$, $h^*_1=|\Pc \cap \ZZ^n|-(d +1)$ and $h^*_{d}=|{\rm relint} (\Pc) \cap \ZZ^n|$, where ${\rm relint} (\Pc)$ is the relative interior of $\Pc$.
Furthermore, $h^*(\Pc,1)=\sum_{i=0}^{d} h_i^*$ is equal to the normalized volume of $\Pc$.
We refer the reader to \cite{BeckRobins} for the detailed information about Ehrhart polynomials and $h^*$-polynomials.

A full-dimensional lattice polytope $\Pc \subset \RR^n$ is called \textit{reflexive} if the origin of $\RR^n$ belongs to the interior of $\Pc$ and its dual polytope 
\[\Pc^\vee:=\{\yb \in \RR^n  :  \langle \xb,\yb \rangle \leq 1 \ \text{for all}\  \xb \in \Pc \}\]
is also a lattice polytope, where $\langle \xb,\yb \rangle$ is the usual inner product of $\RR^n$.
Two lattice polytopes  $\mathcal{P} \subset \mathbb{R}^n$ and $\mathcal{Q} \subset \mathbb{R}^{m}$ are said to be \textit{unimodularly equivalent} if there exists an affine map from the affine span ${\rm aff}(\mathcal{P})$ of $\mathcal{P}$ to the affine span ${\rm aff}(\mathcal{Q})$ of $\mathcal{Q}$ that maps $\mathbb{Z}^n \cap {\rm aff}(\mathcal{P})$ bijectively onto $\mathbb{Z}^m \cap {\rm aff}(\mathcal{Q})$ and maps $\mathcal{P}$ to $\mathcal{Q}$.
Each lattice polytope is unimodularly equivalent to a full-dimensional lattice polytope.
In general, we say that a lattice polytope is reflexive if it is unimodularly equivalent to a reflexive polytope.
We can characterize when a lattice polytope is reflexive in terms of its $h^*$-polynomial.
A polynomial $f= \sum_{i=0}^{d}a_i x^i \in \RR[x]$ of degree $d$ is called \textit{palindromic} if $a_i=a_{d-i}$ for any $i$.
\begin{proposition}[\cite{hibi}]
A lattice polytope $\Pc$ of dimension $d$ is reflexive if and only if $h^*(\Pc,x)$ is a palindromic polynomial of degree $d$.
\end{proposition}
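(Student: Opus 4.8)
The plan is to prove both implications at once by interposing a single condition on Ehrhart polynomials, namely
\[
(\ast)\qquad |\mathrm{int}(t\Pc)\cap\ZZ^n|=L_{\Pc}(t-1)\quad\text{for every integer }t\ge 1,
\]
and showing ``$\Pc$ reflexive'' $\iff(\ast)\iff$ ``$h^*(\Pc,x)$ palindromic of degree $d$''. Both outer conditions depend only on the lattice-point structure, hence are invariant under unimodular equivalence, and every lattice polytope is unimodularly equivalent to a full-dimensional one; so I first assume $\Pc$ is full-dimensional, whence $\mathrm{int}(\Pc)={\rm relint}(\Pc)$.

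For $(\ast)\iff$ palindromicity I would argue purely with generating functions. Starting from $\mathrm{Ehr}_{\Pc}(x)=h^*(\Pc,x)/(1-x)^{d+1}$ and invoking Ehrhart--Macdonald reciprocity in the form $\sum_{t\ge1}|\mathrm{int}(t\Pc)\cap\ZZ^n|\,x^t=(-1)^{d+1}\mathrm{Ehr}_{\Pc}(1/x)$, a short simplification turns the right-hand side into $x^{d+1}h^*(\Pc,1/x)/(1-x)^{d+1}$, while $(\ast)$ asserts that this same series equals $x\,\mathrm{Ehr}_{\Pc}(x)=x\,h^*(\Pc,x)/(1-x)^{d+1}$. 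Comparing numerators, $(\ast)$ becomes the polynomial identity $x^{d}h^*(\Pc,1/x)=h^*(\Pc,x)$; since $h^*_0=1\ne0$ this is precisely palindromicity of degree $d$, and $(\ast)$ at $t=1$ also gives $h^*_d=L_\Pc(0)=1$, consistent with the degree. This part is routine.

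The geometric equivalence $(\ast)\iff$ reflexivity is where the content lies. If $\Pc$ is reflexive I may take $0\in\mathrm{int}(\Pc)$ and $\Pc=\{x:\langle a_i,x\rangle\le1\}$ with primitive $a_i\in\ZZ^n$ (the vertices of $\Pc^\vee$); then for lattice $x$ the inequalities $\langle a_i,x\rangle<t$ and $\langle a_i,x\rangle\le t-1$ coincide, so $\mathrm{int}(t\Pc)\cap\ZZ^n=(t-1)\Pc\cap\ZZ^n$ and $(\ast)$ holds. Conversely, assume $(\ast)$. At $t=1$ it gives $|\mathrm{int}(\Pc)\cap\ZZ^n|=L_\Pc(0)=1$, a unique interior lattice point, which I translate to the origin; writing $\Pc=\{x:\langle a_i,x\rangle\le b_i\}$ with primitive $a_i\in\ZZ^n$ and $b_i\in\ZZ_{>0}$, reflexivity is exactly ``all $b_i=1$'' (equivalently $\Pc^\vee=\mathrm{conv}\{a_i/b_i\}$ is a lattice polytope). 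Since $0$ is interior, convexity forces $(t-1)\Pc\subseteq\mathrm{int}(t\Pc)$, so $(\ast)$ upgrades from equality of cardinalities to set equality $\{x\in\ZZ^n:\langle a_i,x\rangle\le tb_i-1\ \forall i\}=\{x\in\ZZ^n:\langle a_i,x\rangle\le(t-1)b_i\ \forall i\}$ for all $t$. If every $b_i=1$ the two sides agree identically; the remaining task is to show that a facet with $b_{i_0}\ge2$ destroys this for some $t$. For that I would fix $y_0$ in the relative interior of $F_{i_0}=\Pc\cap\{\langle a_{i_0},x\rangle=b_{i_0}\}$ and, for large $t$, push $ty_0$ inward onto $H_t=\{\langle a_{i_0},x\rangle=tb_{i_0}-1\}$: the slice $\mathrm{int}(t\Pc)\cap H_t$ then contains a Euclidean ball of radius growing linearly in $t$, and since $a_{i_0}$ is primitive, $H_t\cap\ZZ^n$ is a coset of a rank-$(d-1)$ sublattice, so for $t$ large that ball contains a lattice point $x$. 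Such an $x$ lies in $\mathrm{int}(t\Pc)\cap\ZZ^n$ but has $\langle a_{i_0},x\rangle=tb_{i_0}-1>(t-1)b_{i_0}$ (here $b_{i_0}\ge2$ is used), so $x\notin(t-1)\Pc$, contradicting $(\ast)$. Hence all $b_i=1$ and $\Pc$ is reflexive.

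The main obstacle is this last step: turning ``a large slice of a dilate contains a lattice point of the slicing hyperplane'' into a clean geometry-of-numbers argument, and observing that it is exactly the surplus lattice width $b_{i_0}-1\ge1$ at a non-reflexive facet that creates the room between $\mathrm{int}(t\Pc)$ and $(t-1)\Pc$. Everything else --- the generating-function identity and the primitive facet description of a polytope with the origin in its interior --- is bookkeeping.
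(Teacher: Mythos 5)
The paper does not actually prove this proposition; it is quoted from Hibi's paper \cite{hibi} without proof, so there is no internal argument to compare against. Judged on its own terms, your proof is correct and is essentially the classical argument: the equivalence of palindromicity with the condition $|\mathrm{int}(t\Pc)\cap\ZZ^n|=L_{\Pc}(t-1)$ is exactly the generating-function manipulation via Ehrhart--Macdonald reciprocity (which you should cite explicitly, since it is the one nontrivial external ingredient, just as in Hibi's original treatment), and the geometric equivalence with reflexivity via the facet description $\langle a_i,x\rangle\le b_i$, $a_i$ primitive, $b_i\in\ZZ_{>0}$, with reflexivity meaning all $b_i=1$, is the standard route. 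Your forward direction is fine (primitivity of the $a_i$ is not even needed there, integrality suffices), and in the converse the key observations are all in place: the containment $(t-1)\Pc\subseteq\mathrm{int}(t\Pc)$ upgrades the equality of counts to equality of lattice-point sets, integrality of the $b_i$ follows since each facet contains vertices of $\Pc$ and $a_i$ is primitive, and a facet with $b_{i_0}\ge 2$ is ruled out by your covering-radius argument: the slice $\mathrm{int}(t\Pc)\cap\{\langle a_{i_0},x\rangle=tb_{i_0}-1\}$ contains a $(d-1)$-ball of radius growing linearly in $t$ (uniform positive slack at a relative interior point of the facet), while $\{\langle a_{i_0},x\rangle=tb_{i_0}-1\}\cap\ZZ^n$ is a translate of a fixed rank-$(d-1)$ lattice with finite covering radius, so for large $t$ it meets the slice, producing a lattice point violating $(\ast)$. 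This last step is the only place where many expositions use a slightly slicker argument (e.g.\ working with a single well-chosen lattice point on the facet rather than a growing ball), but your version is sound; just make sure in the write-up that the full-dimensionality assumption $d=n$ is stated where the rank-$(d-1)$ claim is used.
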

Let  $f= \sum_{i=0}^{d}a_i x^i$ be a polynomial with real coefficients and $a_d \neq 0$.
We now focus on the following properties.
\begin{itemize}
	\item[(RR)] We say that $f$ is {\em real-rooted} if all its roots are real.  
	\item[(LC)] We say that $f$ is {\em log-concave} if $a_i^2 \geq a_{i-1}a_{i+1}$ for all $i$.
	\item[(UN)] We say that $f$ is {\em unimodal} if $a_0 \leq a_1 \leq \cdots \leq a_k \geq \cdots \geq a_d$ for some $k$.
\end{itemize}  
If all its coefficients are positive, then these properties satisfy the implications
\[
{\rm(RR)} \Rightarrow {\rm(LC)} \Rightarrow {\rm(UN)}.
\]
Assume that $f$ is palindromic. Then $f$ has a unique expression 
\[
f=\sum_{i \geq 0}^{\lfloor d/2\rfloor}
\gamma_i \  x^i (1+x)^{d-2i}
\]
with $\gamma_0,\gamma_1,\ldots,\gamma_{\lfloor d/2\rfloor} \in \RR$. 
The polynomial $\sum_{i \geq 0}\gamma_i \ x^i$ is called {\em $\gamma$-polynomial of $f$}.
We say that $f$ is {\em $\gamma$-positive} if each $\gamma_i \geq 0$.
 We can see that a $\gamma$-positive polynomial is real-rooted if and only if its $\gamma$-polynomial has only real roots \cite[Observation 4.2]{EulerianNumbers}.
 For a reflexive polytope $\Pc$, denote $\gamma(\Pc,x)$ the $\gamma$-polynomial of $h^*(\Pc,x)$.

For a given lattice polytope, a fundamental problem within the field of Ehrhart theory is to determine if its $h^*$-polynomial is unimodal.
One famous instance is given by reflexive polytopes that possess a regular unimodular triangulation.

\begin{proposition}[\cite{BR}]
	Let $\Pc$ be a reflexive polytope.
	If $\Pc$ possesses a regular unimodular triangulation, then $h^*(\Pc, x)$ is unimodal.
\end{proposition}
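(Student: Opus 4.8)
The plan is to deduce the unimodality of $h^*(\Pc,x)$ from the $g$-theorem for simplicial polytopes. After a unimodular change of coordinates assume the origin lies in the interior of $\Pc$, fix a regular unimodular triangulation $\Delta$ of $\Pc$, and set $d=\dim\Pc$. The first step is the Betke--McMullen identity: because every maximal cell of $\Delta$ is a unimodular simplex, expanding the Ehrhart series of $\Pc$ cell by cell gives $h^*(\Pc,x)=h(\Delta,x)$, the $h$-polynomial of the simplicial complex $\Delta$, which triangulates the $d$-ball $\Pc$. Since $\Pc$ is reflexive, \cite{hibi} tells us that $h^*(\Pc,x)$ is palindromic of degree $d$, so the $h$-vector $(h_0,\dots,h_d)$ of $\Delta$ is symmetric with $h_0=h_d=1$ and $h_{d+1}(\Delta)=0$.

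Next I would pass to the boundary sphere $\partial\Delta$, the triangulation of $\partial\Pc\cong S^{d-1}$ induced by $\Delta$. A Dehn--Sommerville-type relation for a simplicial ball and its boundary gives $h_i(\partial\Delta)-h_{i-1}(\partial\Delta)=h_i(\Delta)-h_{d+1-i}(\Delta)$ for $1\le i\le d$, with $h_0(\partial\Delta)=h_0(\Delta)$; since $h(\Delta)$ is palindromic of degree $d$ we have $h_{d+1-i}(\Delta)=h_{i-1}(\Delta)$, so the right-hand side telescopes and $h_i(\partial\Delta)=h_i(\Delta)=h^*_i$ for every $i$. (This recovers the statement, recalled in the introduction, that for a reflexive polytope the $h^*$-polynomial coincides with the $h$-polynomial of a unimodular triangulation of the boundary.) It therefore suffices to prove that the $h$-vector of the simplicial $(d-1)$-sphere $\partial\Delta$ is unimodal.

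The key step is to realise $\partial\Delta$ as the boundary complex of a simplicial polytope. Because $\partial\Delta$ restricts to a regular triangulation of every facet of $\Pc$, one can push the lattice points on $\partial\Pc$ slightly outward — by amounts prescribed by the height function defining the regular triangulation $\Delta$ — so that the pushed points land in convex position and the boundary complex of their convex hull is combinatorially $\partial\Delta$; this produces a simplicial $d$-polytope $Q$ with $\partial Q\cong\partial\Delta$. The $g$-theorem for simplicial polytopes (Billera--Lee and Stanley) then says that the $g$-vector $(1,\,h_1-h_0,\,h_2-h_1,\dots)$ of $\partial Q$ is an $M$-sequence; in particular its entries are nonnegative, i.e.\ $h_0\le h_1\le\cdots\le h_{\lfloor d/2\rfloor}$, which together with palindromicity yields the unimodality of $h^*(\Pc,x)$.

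I expect the main obstacle to be exactly this realisation step: pushing the boundary lattice points into convex position \emph{consistently} — the delicate points being those lying on faces of $\Pc$ of codimension at least two, where the two prescriptions coming from adjacent facets must be reconciled — and verifying that the result is genuinely a polytope rather than merely a shellable sphere. Everything downstream is then the (deep but citable) $g$-theorem together with the elementary $h$-vector bookkeeping above. A more algebraic alternative for this step uses that a regular unimodular triangulation yields a flat degeneration of the toric ring $k[\Pc]$ — Gorenstein since $\Pc$ is reflexive — to the Cohen--Macaulay Stanley--Reisner ring $k[\Delta]$ with the same Hilbert series, after which one runs a weak-Lefschetz-type argument on an Artinian reduction; there the technical heart is to exploit the squarefree (ball) structure of the degeneration strongly enough to overcome the fact that symmetric $h$-vectors of Gorenstein algebras need not be unimodal in general.
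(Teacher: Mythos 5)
The paper does not prove this proposition at all: it is quoted from Bruns--R\"omer \cite{BR}, whose argument is algebraic (the regular unimodular triangulation gives a squarefree Gr\"obner degeneration of the Gorenstein Ehrhart ring, and the $h^*$-vector is identified with the $h$-vector of the boundary complex of a simplicial polytope). Your reductions are fine as far as they go: $h^*(\mathcal{P},x)=h(\Delta,x)$ for a unimodular triangulation (Betke--McMullen/Stanley), palindromicity from Hibi, and the McMullen--Walkup ball/boundary relation $h_i(\Delta)-h_{d+1-i}(\Delta)=h_i(\partial\Delta)-h_{i-1}(\partial\Delta)$ does give $h_i(\partial\Delta)=h_i^*$, so it indeed suffices to show that $\partial\Delta$ has the $h$-vector of a simplicial polytope boundary.

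The genuine gap is precisely the realisation step, which you assert and then yourself flag as the main obstacle. As written, all you have is that $\partial\Delta$ is a star-shaped simplicial sphere (the fan over it from the origin is a complete simplicial fan); for such spheres nonnegativity of the $g$-vector was \emph{not} available by the classical Billera--Lee--Stanley theorem (it needs the $g$-conjecture for spheres, far beyond what \cite{BR} could or did use), so without the convex realisation the argument does not close. The pushing construction can in fact be completed, but the missing work is exactly the content: (i) check that the restriction of the global height function $\omega$ to a facet $F$ of $\mathcal{P}$ induces precisely $\Delta|_F$ (extend an affine witness on $\mathrm{aff}(F)$ by a large multiple of the facet inequality); (ii) for $q_v=(1-\epsilon\,\omega(v))v$ show that for all small $\epsilon>0$ every facet of $Q_\epsilon=\mathrm{conv}\{q_v\}$ has its vertex set inside a single facet of $\mathcal{P}$ (a limiting-normal argument, together with the observation that a facet whose vertices all come from a face of codimension $\ge 2$ would lie in a linear hyperplane through the origin, impossible since $0\in\mathrm{int}\,Q_\epsilon$), and that over each $F$ the induced cells are exactly those of $\Delta|_F$. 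Note that the difficulty you single out---reconciling prescriptions on codimension-two faces---is actually a non-issue, since all weights come from the single global $\omega$; the real content is (i)--(ii), equivalently producing a strictly convex piecewise-linear support function on the fan over $\partial\Delta$. Your alternative algebraic sketch is closer in spirit to the actual proof in \cite{BR}, but as you concede it is also not carried out: a generic Artinian reduction of the Cohen--Macaulay degeneration does not by itself supply the Lefschetz-type injectivity one would need.
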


It is known that if a reflexive polytope possesses a flag regular unimodular triangulation all of whose maximal simplices contain the origin, then the $h^*$-polynomial coincides with the $h$-polynomial of a flag triangulation of a sphere \cite{BR}. 
For the $h$-polynomial of a flag triangulation of a sphere, Gal  conjectured the following:

\begin{conjecture}[\cite{Gal}]
The $h$-polynomial of any flag triangulation of a sphere is $\gamma$-positive.
\end{conjecture}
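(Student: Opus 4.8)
The plan is to reduce Gal's conjecture to a list of coefficientwise inequalities on the $\gamma$-vector and then to establish these by a double induction: on the dimension of the sphere, and, for a fixed dimension, along a chain of local modifications reducing an arbitrary flag sphere to the boundary of a cross-polytope.

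\textbf{Step 1 (unwinding the $\gamma$-vector).} Let $\Delta$ be a flag triangulation of $S^{d-1}$ with $h$-polynomial $h(\Delta,x)=\sum_{i=0}^d h_i x^i$; by Dehn--Sommerville this is palindromic of degree $d$, so $h(\Delta,x)=\sum_{i=0}^{\lfloor d/2\rfloor}\gamma_i\,x^i(1+x)^{d-2i}$. Comparing coefficients expresses each $\gamma_i$ as an explicit alternating combination of $h_0,\dots,h_i$, hence of the face numbers $f_{-1},\dots,f_{i-1}$. One reads off $\gamma_0=1$ and $\gamma_1=f_0-2d$, and $\gamma_1\ge 0$ is the classical fact that a flag $(d-1)$-sphere has at least $2d$ vertices (equality only for the boundary of the $d$-dimensional cross-polytope). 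The first substantial case, $\gamma_2\ge 0$, was established by Gal, which already settles the conjecture whenever $\lfloor d/2\rfloor\le 2$, i.e.\ for flag spheres of dimension at most $4$. So it remains to control $\gamma_i$ for $i\ge 3$.

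\textbf{Step 2 (localization).} Fix $d$ and assume, by induction on dimension, that the conjecture holds for all flag spheres of dimension $<d-1$. Let $\Delta'$ be obtained from $\Delta$ by a stellar subdivision at an edge $e=\{u,v\}$. This operation preserves the property of being a (PL) sphere, and it preserves flagness, since the new vertex has link $(S^0\text{ on }\{u,v\})\ast\mathrm{lk}_\Delta(e)$, a join of flag complexes, while every clique avoiding the new vertex already avoided the edge $e$ and hence was a face of $\Delta$. The standard stellar-subdivision identity for an edge gives $h(\Delta',x)=h(\Delta,x)+x\,h(\mathrm{lk}_\Delta(e),x)$, and passing to $\gamma$-polynomials (both sides palindromic of degree $d$) this becomes $\gamma(\Delta',x)=\gamma(\Delta,x)+x\,\gamma(\mathrm{lk}_\Delta(e),x)$. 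Since $\mathrm{lk}_\Delta(e)$ is a flag $(d-3)$-sphere, the dimension induction makes $x\,\gamma(\mathrm{lk}_\Delta(e),x)$ coefficientwise nonnegative. The base of the build-up is the boundary of the cross-polytope, whose $\gamma$-polynomial is $1$. Hence, if every flag $(d-1)$-sphere could be produced from a cross-polytope boundary by a finite sequence of edge subdivisions, summing the nonnegative increments along such a sequence would yield $\gamma_i(\Delta)\ge0$ for all $i$, completing the induction.

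\textbf{Main obstacle.} The scheme above isolates the difficulty in a single combinatorial statement: \emph{flip/subdivision connectivity of flag spheres} --- that every flag $(d-1)$-sphere is reachable from the boundary of the cross-polytope by edge subdivisions, equivalently that every non-cross-polytope flag sphere admits an edge whose contraction satisfies the link condition and preserves flagness and the sphere property. No such result is known, and no replacement family of local moves generating all flag spheres is known; indeed it is conceivable that Gal's conjecture simply fails in high dimensions, in which case no proof exists. Even granting enough local moves, the extreme coefficient $\gamma_{d/2}$ for $d$ even is exactly the Charney--Davis quantity, and its nonnegativity is the Charney--Davis conjecture, open beyond low dimensions (e.g.\ $d=4$, via $L^2$-cohomology of the right-angled Coxeter group). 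An alternative route is the stronger Nevo--Petersen conjecture --- realizing $\gamma(\Delta,x)$ as the $f$-polynomial of an explicit flag (equivalently balanced) complex --- which is precisely what the present paper carries out for the polytopes $\Ac_{\widehat G}$ with $G$ a cactus; but a uniform such construction for all flag spheres is not available. In short, in the generality stated this remains a conjecture, and any genuine proof must break one of these barriers.
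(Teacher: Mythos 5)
This statement is Gal's conjecture, which the paper records (citing \cite{Gal}) purely as background and motivation; the paper contains no proof of it, and none is known. So the honest conclusion of your proposal --- that in the stated generality this remains an open conjecture --- is exactly right, and there is no proof in the paper to compare against. What the paper actually establishes are special cases of the stronger Nevo--Petersen-type statement for the particular flag spheres arising as triangulations of $\partial\Ac_{\widehat{G}}$: $\gamma$-positivity via the interior/matching-polynomial formulas (Theorems \ref{formulaA} and \ref{thm:realrooted}) and the realization of $\gamma(\Ac_{\widehat{G}},x)$ as an $f$-polynomial of a flag complex when $G$ has no even cycles (Theorem \ref{thm:flag_A}); this matches your closing remark.

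As for the strategy you sketch: the individual ingredients are the standard ones (palindromicity by Dehn--Sommerville, $\gamma_0=1$, $\gamma_1=f_0-2d\ge 0$ with equality only for the cross-polytope boundary, and the identity $\gamma(\Delta',x)=\gamma(\Delta,x)+x\,\gamma(\mathrm{lk}_\Delta(e),x)$ for an edge subdivision, which preserves flagness). But the step you correctly isolate as the obstacle is genuinely false as a one-way move: flag PL spheres are known to be connected to the cross-polytope boundary only by edge subdivisions \emph{together with} edge contractions (Lutz--Nevo), and contractions subtract $x\,\gamma(\mathrm{lk}(e),x)$, so the monotone accumulation argument breaks down; if subdivisions alone sufficed, Gal's conjecture would already follow, so no such connectivity can be expected without proving the conjecture itself. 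One should also be cautious about the claim that $\gamma_2\ge 0$ is settled in all dimensions: Gal's paper verifies the conjecture for spheres of dimension at most $4$, and the general nonnegativity of higher $\gamma_i$ (including the Charney--Davis coefficient $\gamma_{d/2}$) is open, as you note. In short, your proposal is a reasonable reduction sketch, not a proof, and your own assessment of its status is accurate.
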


More strongly, Nevo and Petersen conjectured the following:
\begin{conjecture}[\cite{NP}]
\label{conj:NP}
The $\gamma$-polynomial of the $h$-polynomial of any flag triangulation of a sphere coincides with the $f$-polynomial of some balanced simplicial complex.
\end{conjecture}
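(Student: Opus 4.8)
Conjecture \ref{conj:NP} is wide open in general, so my plan is not to settle it outright but to verify it for the flag spheres that actually occur in this paper, namely the boundaries $\partial\Ac_{\widehat G}$ (and later $\partial\Bc_G$) that carry a flag regular unimodular triangulation. I would argue in three stages. \textbf{Stage 1 (produce the flag spheres).} Using the Gr\"obner basis argument of Theorem \ref{flagtri}, when $G$ is a forest (equivalently, a bipartite graph with no cycles) the toric ideal of $\Ac_{\widehat G}$ has a squarefree quadratic initial ideal, so $\Ac_{\widehat G}$ has a flag regular unimodular triangulation all of whose maximal simplices contain the origin; restricting it to the boundary and invoking the criterion of \cite{BR} quoted above, $h^*(\Ac_{\widehat G},x)$ becomes the $h$-polynomial of a flag triangulation of a sphere, and $\gamma(\Ac_{\widehat G},x)$ is exactly the polynomial whose realization as an $f$-vector the conjecture demands.

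\textbf{Stage 2 (make the $\gamma$-polynomial combinatorial).} Since a forest has no even cycle, $\Rc_2'(G)=\varnothing$ in Theorem \ref{formulaA} and all correction terms disappear, leaving
\[
\gamma(\Ac_{\widehat G},x)=g(G,2x)=\sum_{k\ge 0}2^{k}m_k(G)\,x^{k},
\]
where $m_k(G)$ is the number of $k$-element matchings of $G$. The crux is to realize this as the $f$-polynomial of a flag complex. I would use the edge-doubling complex $\Delta_G$ on the vertex set $\{e^{+},e^{-}:e\in E(G)\}$ whose faces are the sets $S$ such that (i) $S$ meets each pair $\{e^{+},e^{-}\}$ in at most one vertex, and (ii) the set of edges underlying $S$ is a matching of $G$. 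A $k$-face of $\Delta_G$ is a $k$-matching of $G$ with a sign attached to each of its edges, so the number of $(k-1)$-faces of $\Delta_G$ equals $2^{k}m_k(G)$ and $f(\Delta_G,x)=g(G,2x)$; moreover $\Delta_G$ is flag because every minimal non-face has two vertices, being either $\{e^{+},e^{-}\}$ or $\{e^{\varepsilon},f^{\delta}\}$ for distinct adjacent edges $e,f$. This is precisely Theorem \ref{thm:flag_A} (which in fact needs only that $G$ has no even cycle). \textbf{Stage 3 (pass to a balanced complex).} Apply Frohmader's theorem — the face vector of a flag complex is the face vector of some balanced complex, the statement behind \cite{flagcomplex} — to $\Delta_G$; the resulting balanced complex has $f$-polynomial $g(G,2x)=\gamma(\Ac_{\widehat G},x)$, which is exactly what Conjecture \ref{conj:NP} requires for the flag sphere triangulations found in Stage 1. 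The same four ingredients — the Gr\"obner flag triangulation, the cactus formula of Theorem \ref{formulaA}, the complex $\Delta_G$, and Frohmader — would then be transported to $\Bc_G$ for forests $G$.

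The main obstacle is that this route is confined to the sign-free case and cannot reach the conjecture in any generality. As soon as $G$ has an even cycle, Theorem \ref{formulaA} contributes the alternating terms $\sum_{R\in\Rc_2'(G)}(-2)^{c(R)}g(G-R,2x)\,x^{|E(R)|/2}$, and there is no evident reason the resulting polynomial is the $f$-vector of any simplicial complex, much less a flag or balanced one; reaching even all cactus suspensions would require a genuinely different, cancellation-aware combinatorial model. And the conjecture for arbitrary flag triangulations of spheres lies entirely beyond the reach of these polytope-specific tools — what the plan above actually establishes is Conjecture \ref{conj:NP} for the flag unimodular triangulations of $\partial\Ac_{\widehat G}$ and $\partial\Bc_G$ when $G$ is a forest, together with the $f$-vector realization of $\gamma(\Ac_{\widehat G},x)$ whenever $G$ has no even cycle.
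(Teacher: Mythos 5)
The statement you were handed is Conjecture \ref{conj:NP}, which the paper only \emph{quotes} from \cite{NP}: it is an open conjecture, the paper contains no proof of it, and your proposal --- as you candidly acknowledge --- does not prove it either, so there is no paper proof to compare against. What can be compared is your partial verification with the partial results the paper does establish, and there your three stages track the paper almost exactly: Stage 1 is Theorem \ref{flagtri} together with Propositions \ref{stu1} and \ref{stu2} and the criterion of \cite{BR} (correctly restricted to forests, where the chord condition is vacuous); Stage 2 is Corollary \ref{cor:noeven} feeding into Theorem \ref{thm:flag_A}; Stage 3 is Frohmader's theorem \cite{flagcomplex}, invoked just as the paper does after Theorem \ref{thm:flag_A}; and the transport to $\Bc_G$ for forests is Theorem \ref{thm:NPB} with Remark \ref{rem:B}. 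The one genuine (if modest) difference is in Stage 2: your edge-doubling complex $\Delta_G$ on $\{e^{+},e^{-}:e\in E(G)\}$ is a direct, hands-on description of the complex the paper reaches via $g(G,x)=i(L(G),x)$ (Proposition \ref{prop:matchflag}) and the lexicographic product construction (Proposition \ref{fmx} via Lemma \ref{lem:independence}); indeed $\Delta_G$ is precisely the clique complex of the complement of $L(G)[K_2]$, so your argument re-proves the $m=2$ case of Proposition \ref{fmx} without needing the independence-polynomial identity, and your flagness check (all minimal non-faces are $\{e^{+},e^{-}\}$ or two signed adjacent edges) and the count $2^{k}m_k(G)$ of $(k-1)$-faces are correct. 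Your closing caveats are also accurate: for cactus graphs with even cycles the signed correction terms in Theorem \ref{formulaA} leave the $f$-vector realization of $\gamma(\Ac_{\widehat G},x)$ open, and nothing here touches arbitrary flag spheres. In short: correct as far as it goes, essentially the same route as the paper for the cases the paper settles, but --- like the paper --- it leaves Conjecture \ref{conj:NP} itself unproven.
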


\section{Interior polynomials}
In \cite{OTlocally}, for any graph $G$ a formula for $\gamma(\Ac_{\widehat{G}},x)$ in terms of interior polynomials was given. 
In this section, we recall the definition of interior polynomials and the formula.

A {\em hypergraph} is a pair $\Hc = (V, E)$, where $E=\{e_1,\ldots,e_n\}$ is a finite multiset of non-empty subsets of $V=\{v_1,\ldots,v_m\}$. 
Elements of $V$ are called vertices and the elements of $E$ are the  hyperedges.
Then we can associate $\Hc$ to a bipartite graph ${\rm Bip} \Hc$
on the vertex set $V \cup E$ with the edge set
$\{ \{v_i, e_j\} : v_i \in e_j\}.$
Assume that ${\rm Bip} \Hc$ is connected.
A {\em hypertree} in $\Hc$ is a function ${\bf f}: E \rightarrow \ZZ_{\ge 0}$
such that there exists a spanning tree $\Gamma$ of ${\rm Bip} \Hc$ 
whose vertices have degree ${\bf f} (e) +1$ at each $e \in E$.
Then we say that $\Gamma$ induces ${\bf f}$.
Let ${\rm HT}(\Hc)$ denote the set of all hypertrees in $\Hc$.
A hyperedge $e_j \in E$ is said to be {\em internally active}
with respect to the hypertree ${\bf f}$ if it is not possible to 
decrease ${\bf f}(e_j)$ by $1$ and increase ${\bf f}(e_{j'})$ ($j' < j$) by $1$
so that another hypertree results.
We call a hyperedge {\em internally inactive} with respect to a hypertree
if it is not internally active and denote the number of such hyperedges 
of ${\bf f}$ by $\overline{\iota} ({\bf f}) $.
Then the {\em interior polynomial} of $\Hc$
is the generating function 
$I_\Hc (x) = \sum_{{\bf f} \in {{\rm HT}(\Hc)}} x^{ \overline{\iota} ({\bf f}) }$.
It is known \cite[Proposition~6.1]{interior} that $\deg I_\Hc (x) \le \min\{|V|,|E|\} - 1$.
If $G = {\rm Bip} \Hc$, then we set ${\rm HT}(G) = {\rm HT}(\Hc)$ and $I_G (x) = I_\Hc (x)$.

Let $G$ be a finite graph on $[n]$ with the edge set $E(G)$.
Given a subset $S \subset [n]$,
\[
E_S:=\{e \in E(G) : |e \cap S | =1  \}
\] 
is called a \textit{cut} of $G$.
For example, we have $E_{\emptyset}=E_{[n]}= \emptyset$. In general, it follows that $E_S=E_{[n] \setminus S}$.
We identify $E_S$ with the subgraph of $G$ on the vertex set $[n]$ and the edge set $E_S$.
 By definition, $E_S$ is a bipartite graph. Let ${\rm Cut}(G)$ be the set of all cuts of $G$. Note that $|{\rm Cut}(G)|=2^{n-1}$.

Assume that $G$ is a bipartite graph with a bipartition
$V_1 \cup V_2 =[n]$.
Then let $\widetilde{G}$ be a connected bipartite graph on $[n+2]$
whose edge set is 
\[
E(\widetilde{G}) = E(G) \cup \{ \{i, n+1\}  : i \in V_1\} \cup \{ \{j, n+2\}  : j \in V_2 \cup \{n+1\}\}.
\]

\begin{theorem}[{\cite[Theorem 5.3]{OTlocally}}]
\label{thm:suspinterior}
Let $G$ be a finite graph on $[n]$. Then one has 
\[
\gamma(\Ac_{\widehat{G}},x)=\dfrac{1}{2^{n-1}} \sum_{H \in {\rm Cut}(G)} I_{\widetilde{H}}(4x).
\]
In particular, ${\rm Vol}(\Ac_{\widehat{G}})=2 \sum_{H \in {\rm Cut}(G)} |{\rm HT}(\widetilde{H})|.$
\end{theorem}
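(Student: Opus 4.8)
The plan is to reduce the identity to the face combinatorics of one explicit triangulation of the boundary sphere $\partial\Ac_{\widehat{G}}$ and then to recognize its blocks through the interior polynomials of the graphs $\widetilde{H}$. Since $\widehat{G}$ is connected, $\Ac_{\widehat{G}}$ is an $n$-dimensional reflexive polytope, so $h^*(\Ac_{\widehat{G}},x)$ is palindromic of degree $n$ and $\gamma(\Ac_{\widehat{G}},x)$ is well defined; moreover symmetric edge polytopes carry regular unimodular triangulations, which may be chosen so that every maximal simplex contains the origin, and then $h^*(\Ac_{\widehat{G}},x)$ equals the $h$-polynomial of the induced unimodular triangulation $\mathcal{T}$ of $\partial\Ac_{\widehat{G}}$. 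First I would fix such a $\mathcal{T}$, arising from a term order on the toric ideal of $\widehat{G}$ chosen so that the apex $n+1$ plays a distinguished role.

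The core step is a decomposition of the facets of $\mathcal{T}$ indexed by the cuts of $G$. A facet $\sigma$ uses, for each $i\in[n]$, at most one of the two vectors $\pm(\eb_i-\eb_{n+1})$; recording these choices produces a sign pattern $\epsilon_\sigma\colon[n]\to\{+,-\}$, hence a subset $S_\sigma\subseteq[n]$ and a cut $E_{S_\sigma}$ of $G$. I would show that the facets with a fixed sign pattern $\epsilon$ are in natural correspondence with certain simplices built from the completed bipartite graph $\widetilde{H}$ attached to $H=E_{S}$ with $S=\epsilon^{-1}(+)$ — the two extra vertices $n+1,n+2$ and the edge $\{n+1,n+2\}$ of $\widetilde{H}$ being exactly what is needed to absorb the apex, the directions $\eb_i-\eb_{n+1}$, and any isolated vertices of the cut $E_S$ — and, crucially, that the statistic computing the local $h$-contribution of this block coincides with the internal inactivity statistic $\overline{\iota}$ on the hypertrees of $\widetilde{H}$. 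Granting this, the theorem of K\'alm\'an and Postnikov, which identifies the $h^*$-polynomial of the root polytope of a connected bipartite graph with its interior polynomial, lets me replace the contribution of the $\epsilon$-block by $I_{\widetilde{H}}$.

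It remains to bookkeep the constants. Because $E_S=E_{[n]\setminus S}$, the sign patterns $\epsilon$ and $-\epsilon$ yield the same cut, so the $2^n$ sign patterns realize each of the $2^{n-1}$ cuts exactly twice; this doubling, together with the $(1+x)$-factors generated when lower-dimensional $\widetilde{H}$-simplices are built up to $(n-1)$-simplices of $\partial\Ac_{\widehat{G}}$ and the degree compression intrinsic to passing from $h^*(\Ac_{\widehat{G}},x)=\sum_i\gamma_i\,x^i(1+x)^{n-2i}$ to $\gamma(\Ac_{\widehat{G}},x)$, is precisely what turns $\sum_\sigma$ into $\tfrac{1}{2^{n-1}}\sum_{H\in{\rm Cut}(G)}I_{\widetilde{H}}(4x)$. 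The volume statement then falls out by specialization: evaluating $h^*=\sum_i\gamma_i\,x^i(1+x)^{n-2i}$ at $x=1$ gives $\Vol(\Ac_{\widehat{G}})=h^*(\Ac_{\widehat{G}},1)=2^n\gamma(\Ac_{\widehat{G}},1/4)=2\sum_{H\in{\rm Cut}(G)}I_{\widetilde{H}}(1)=2\sum_{H\in{\rm Cut}(G)}|{\rm HT}(\widetilde{H})|$, using $I_{\widetilde{H}}(1)=|{\rm HT}(\widetilde{H})|$.

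The hard part will be the core step: one must choose the term order — equivalently, the geometry of $\mathcal{T}$ — so that simultaneously every facet of $\mathcal{T}$ is assigned a well-defined cut, each $\epsilon$-block is faithfully encoded by the graph $\widetilde{H}$ (including the delicate point that disconnected cuts and isolated vertices of $E_S$ are correctly handled by the tilde-completion, which is what makes the interior polynomial applicable at all), and the shelling-type statistic governing the local $h$-vector matches internal inactivity so that the K\'alm\'an--Postnikov identity can be invoked block by block. Once this dictionary is in place, the constant juggling behind the factor $2^{-(n-1)}$ and the rescaling $x\mapsto 4x$ is routine, though it has to be carried out carefully, since an off-by-a-power-of-two error is easy to incur when passing between sign patterns, cuts, and suspensions.
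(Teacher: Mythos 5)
Note first that the paper you are working from does not prove this statement at all: it is quoted verbatim from \cite[Theorem 5.3]{OTlocally}, where the argument runs through the ``locally anti-blocking'' machinery. Concretely, projecting away the coordinate of the apex, $\Ac_{\widehat{G}}$ is unimodularly equivalent to the convex hull of $\{\pm\eb_i : i\in[n]\}\cup\{\pm(\eb_i-\eb_j):\{i,j\}\in E(G)\}$, whose intersection with the orthant of sign vector $\epsilon$ is an anti-blocking polytope attached to the cut $H=E_S$, $S=\epsilon^{-1}(+)$; the general decomposition theorem for locally anti-blocking reflexive polytopes expresses $h^*(\Ac_{\widehat{G}},x)$ through these $2^n$ pieces (each cut arising twice, from $\epsilon$ and $-\epsilon$), and the identity $\gamma(\Bc_H,x)=I_{\widetilde{H}}(4x)$ from \cite{OTinterior} converts each piece into an interior polynomial. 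Your proposal instead wants to triangulate $\partial\Ac_{\widehat{G}}$, sort facets by a sign pattern, and match a shelling-type statistic with internal inactivity via the K\'alm\'an--Postnikov identification of $h^*$ of root polytopes with interior polynomials. That is a genuinely different route in flavor, but as written it is not a proof.

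The gap is exactly the step you label the ``core step'' and then assume (``Granting this\,\dots''): you never construct the dictionary between the $\epsilon$-blocks of the triangulation and the hypertrees of $\widetilde{H}$, nor verify that the local $h$-contribution equals the inactivity statistic, and this is where all the content of the theorem lives. Two concrete problems already appear before that. First, your sign pattern is not well defined: a maximal simplex of a triangulation of $\partial\Ac_{\widehat{G}}$ need not contain either of $\pm(\eb_i-\eb_{n+1})$ for a given $i$, and (in the orthant picture) simplices lying in coordinate hyperplanes belong to several orthants; handling these ``wall'' simplices is precisely what forces an inclusion--exclusion (or the anti-blocking formalism) and is ignored by your claim that each cut is counted exactly twice. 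Second, the constants are not bookkeeping: the substitution $x\mapsto 4x$ and the passage from block contributions to the $\gamma$-expansion $h^*=\sum_i\gamma_i x^i(1+x)^{n-2i}$ encode a nontrivial identity (in the actual proofs it is the theorem $\gamma(\Bc_H,x)=I_{\widetilde{H}}(4x)$ of \cite{OTinterior}), so asserting that ``$(1+x)$-factors generated when lower-dimensional simplices are built up'' will produce $\tfrac{1}{2^{n-1}}\sum_H I_{\widetilde{H}}(4x)$ is circular. The final volume computation $\Vol(\Ac_{\widehat{G}})=2^n\gamma(\Ac_{\widehat{G}},1/4)=2\sum_{H}|{\rm HT}(\widetilde{H})|$ is fine once the main identity is available, but on its own it does not repair the missing core.
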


\section{A formula of  $\gamma(\Ac_{\widehat{G}},x)$ for a cactus graph $G$}
In this section, we prove Theorem \ref{formulaA}.
First we recall a relation between interior polynomials and $k$-matchings.
Let $G$ be a finite graph with $n$ vertices.
A {\em $k$-matching} of $G$ is a set of $k$ pairwise non-adjacent edges of $G$.
Let
$$
M(G, k)=
\left\{\{v_{i_1},\ldots , v_{i_{2k}}\}
:
\begin{array}{c}
\mbox{there exists a } k\mbox{-matching of }
G\\
 \mbox{ whose vertex set is }
\{v_{i_1},\ldots , v_{i_{2k}}
\}
\end{array}
\right\}.
$$
For $k=0$, we set $M(G,0) = \{\emptyset\}$.

\begin{proposition}[{\cite[Proposition 3.4]{OTinterior}}]
\label{prop:matchinginterior}
Let $G$ be a bipartite graph. Then we have
\[
I_{\widetilde{G}}(x)=\sum_{k \geq 0}|M(G,k)| x^k.
\]
\end{proposition}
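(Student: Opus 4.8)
The plan is to compute $I_{\widetilde{G}}(x)$ directly from the definition, by first pinning down the set $\mathrm{HT}(\widetilde{G})$ of hypertrees together with its internal‑inactivity statistic, and then matching the resulting generating function with the matching data of $G$.

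Write $V_1\cup V_2=[n]$ for the bipartition of $G$ and, for $S\subseteq V_1$, let $N_G(S)=\bigcup_{i\in S}N_G(i)\subseteq V_2$. Realize $\widetilde{G}=\mathrm{Bip}\,\Hc$ with hyperedge set $V_1\cup\{n+2\}$ and vertex set $V_2\cup\{n+1\}$, so that the hyperedge at $i\in V_1$ is $e_i=N_G(i)\cup\{n+1\}$ while $e_{n+2}=V_2\cup\{n+1\}$ is the whole vertex set. The first step uses K\'alm\'an's polytopal description of hypertrees: $\mathbf{f}\in\mathrm{HT}(\widetilde{G})$ precisely when $\mathbf{f}\in\ZZ_{\ge 0}^{V_1\cup\{n+2\}}$ satisfies $\sum_{e}\mathbf{f}(e)=|V_2|$ and $\sum_{e\in E'}\mathbf{f}(e)\le\bigl|\bigcup_{e\in E'}e\bigr|-1$ for every nonempty $E'$. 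Since this right‑hand side equals $|N_G(S)|$ when $E'=\{e_i:i\in S\}$ and equals $|V_2|$ as soon as $e_{n+2}\in E'$, the constraints involving $e_{n+2}$ are automatically satisfied, and $\mathbf{f}(e_{n+2})=|V_2|-\sum_{i\in V_1}\mathbf{f}(e_i)$ is a nonnegative slack. Hence $\mathbf{f}\mapsto f:=(\mathbf{f}(e_i))_{i\in V_1}$ identifies $\mathrm{HT}(\widetilde{G})$ with $\{\,f\in\ZZ_{\ge 0}^{V_1}:f(S)\le|N_G(S)|\text{ for all }S\subseteq V_1\,\}$.

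For the second step I would order the hyperedges of $\Hc$ so that $e_{n+2}$ comes first. Then $e_{n+2}$ is internally active for every hypertree (there is no earlier hyperedge), and for $i\in V_1$ the hyperedge $e_i$ is internally inactive exactly when $f_i\ge 1$: moving one unit from $e_i$ to $e_{n+2}$ only lowers the quantities $f(S)$ and hence again produces a hypertree, whereas if $f_i=0$ there is nothing to move, so $e_i$ is (trivially) active. Therefore $\overline{\iota}(\mathbf{f})=|\{\,i\in V_1:f_i\ge 1\,\}|$, and grouping hypertrees by the support $A$ of the corresponding $f$ gives
\[
I_{\widetilde{G}}(x)=\sum_{A\subseteq V_1}c_A\,x^{|A|},\qquad c_A:=\#\bigl\{\,f\in\ZZ_{\ge 1}^{A}:f(S)\le|N_G(S)|\text{ for all }S\subseteq A\,\bigr\}.
\]
Since a $k$‑matching of the bipartite graph $G$ is exactly a perfect matching between a $k$‑subset $A\subseteq V_1$ and a $k$‑subset $B\subseteq V_2$, one has $|M(G,k)|=\sum_{A\subseteq V_1,\,|A|=k}m_A$ with $m_A:=\#\{\,B\subseteq V_2:|B|=|A|\text{ and }G[A\cup B]\text{ has a perfect matching}\,\}$. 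So everything comes down to the combinatorial identity $c_A=m_A$ for each fixed $A\subseteq V_1$.

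Proving $c_A=m_A$ is the main obstacle. If $A$ contains a vertex isolated in $G$, or if $|N_G(A)|<|A|$, then both sides are $0$; if $|N_G(A)|=|A|$ then both sides are $1$ (only $f=\mathbf 1$, only $B=N_G(A)$). In the remaining case I would induct on the number of edges of $G$: choose $v\in N_G(A)$ that is not a coloop of the transversal matroid $\mathcal{T}_A$ on $V_2$ whose independent sets are the subsets matchable into $A$ — such a $v$ exists because $|N_G(A)|>|A|$ forces $\mathcal{T}_A$ to have a non‑coloop. On the $m$‑side, splitting by whether $v\in B$ yields $m_A=m_A(G-v)+m_{A\setminus\{i_1\}}(G-v)$, where $i_1\in A$ is a fixed neighbour of $v$ and the second term comes from describing the contraction $\mathcal{T}_A/v$ as a transversal matroid. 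On the $c$‑side, a vector $f$ counted by $c_A(G)$ is still counted by $c_A(G-v)$ iff no tight set $S$ meets $N_G(v)$, so $c_A(G)=c_A(G-v)+\#\{\,f\text{ tight along }v\,\}$. The delicate point — and the crux of the whole argument — is to show that $\#\{\,f\text{ tight along }v\,\}=c_{A\setminus\{i_1\}}(G-v)$; once this matches the two recursions, the induction closes.
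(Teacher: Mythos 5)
Your first three steps are sound, modulo citations: K\'alm\'an's lattice-point description of hypertrees and the order-independence of internal activity give exactly the identification of $\mathrm{HT}(\widetilde{G})$ with $\{f\in\ZZ_{\ge 0}^{V_1}: f(S)\le |N_G(S)| \text{ for all } S\subseteq V_1\}$ and the formula $\overline{\iota}(\mathbf{f})=|\mathrm{supp}(f)|$, and the reduction of the proposition to the identity $c_A=m_A$ is correct. (Note the paper itself does not prove this proposition; it quotes it from [OTinterior, Proposition 3.4], so you are on your own for this last identity, which is where all the content sits.)

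The gap is that $c_A=m_A$ is exactly the step you leave open, and the inductive scheme you sketch for it is not correct as stated. Take $A=\{1,2\}$, $N_G(1)=\{v,w\}$, $N_G(2)=\{v,z\}$. Here $\mathcal{T}_A$ is $U_{2,3}$ on $\{v,w,z\}$, so none of $v,w,z$ is a coloop and your criterion allows choosing the common neighbour $v$. Then $m_A=3$ and $m_A(G-v)=1$, so the number of bases through $v$ is $2$, while $m_{A\setminus\{i_1\}}(G-v)=1$ for either choice of $i_1\in\{1,2\}$; likewise $c_A=3$, $c_A(G-v)=1$, so your ``tight along $v$'' count is $2$, but $c_{A\setminus\{i_1\}}(G-v)=1$. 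Thus both the $m$-side recursion $m_A=m_A(G-v)+m_{A\setminus\{i_1\}}(G-v)$ and the crux identity fail for an admissible $v$. The underlying error is matroid-theoretic: a basis containing $v$ matches $v$ to a neighbour $i\in N_G(v)\cap A$ that varies with the basis, so the bases through $v$ form a union over all such $i$, not the set attached to one fixed $i_1$; equivalently, $\mathcal{T}_A/v$ is in general not the transversal matroid presented by $(N_G(i)\setminus\{v\})_{i\in A\setminus\{i_1\}}$ (transversal matroids are not closed under contraction in this na\"ive sense). Choosing instead a vertex of $N_G(A)$ with a unique neighbour in $A$ does repair the recursion in this example, but such a vertex need not exist, and your non-coloop condition does not isolate a workable choice. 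So the induction does not close as described, and since you explicitly flag this step as the crux, the proposal falls short of a proof precisely at the essential point; you would need either a correct deletion/contraction argument (with a genuinely justified choice of $v$ and a correct description of the bases through $v$), or a different route to $c_A=m_A$ (e.g.\ a direct bijection or a Hall-type exchange argument), before the rest of your reduction yields the proposition.
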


The {\em matching  polynomial} $\alpha(G,x)$ of $G$ is
$$
\alpha(G,x) = \sum_{k \ge 0} (-1)^k m_k ( G ) x^{n-2k},
$$
where $m_k(G)$ is the number of $k$-matchings in $G$.
On the other hand, 
the {\em matching generating polynomial} $g(G,x)$ of $G$ is
$$g(G,x) = \sum_{k \ge 0} m_k ( G ) x^k.$$
It is known \cite[Theorem 5.5.1]{Gut} that $\alpha(G,x)$ is real-rooted
 for any graph $G$.
Since $\alpha(G,x) = x^n g(G,-x^{-2})$, it follows that 
any root of $g(G,x)$ is real and negative.
In fact, if $u$ is a root of $g(G,x)$, then $u$ is not zero and 
$(-u)^{-1/2}$ is a root of $\alpha(G,x)$.
Thus $v=(-u)^{-1/2}$ is real and hence $u = -v^{-2}$ is real and negative.


\begin{lemma}
\label{klemma}
Let $G$ be a graph such that each edge of $G$ belongs to at most one even cycle of $G$.
Then
$$
|M(G, k)| =m_k(G)+
\sum_{R \in {\mathcal R}_2' (G)} (-1)^{c(R)} m_{k-|E(R)|/2} \left(G- R \right),
$$
where 
${\mathcal R}_2' (G)$ is the set of all subgraphs of $G$ 
consisting of vertex-disjoint even cycles,
and $c(R)$ is the number of the cycles of $R$.
\end{lemma}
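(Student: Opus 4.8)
The plan is to reduce the identity to a purely local statement about induced subgraphs of $G$ and then run an induction on the number of even cycles. For a graph $H$ write $\operatorname{pm}(H)$ for its number of perfect matchings, and for $W\subseteq V(G)$ let $G[W]$ be the induced subgraph; here $G-R$ denotes deletion of the vertex set $V(R)$ (so $G-R$ has $n-|E(R)|$ vertices, since a member of $\mathcal R_2'(G)$ has as many edges as vertices). A $k$-matching of $G$ is a perfect matching of $G[W]$ for a unique $2k$-subset $W$, whence $m_k(G)=\sum_{|W|=2k}\operatorname{pm}(G[W])$ and $|M(G,k)|=\#\{W:|W|=2k,\ \operatorname{pm}(G[W])\ge1\}$, while $m_{k-|E(R)|/2}(G-R)=\sum\operatorname{pm}(G[W'])$ over the $W'$ with $W'\cap V(R)=\emptyset$ and $|W'|=2k-|E(R)|$. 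Putting $W=W'\sqcup V(R)$ and grouping all terms by the resulting $2k$-set $W$, the assertion of the lemma becomes the family of identities
\[
[\operatorname{pm}(G[W])\ge1]=\sum_{R}(-1)^{c(R)}\operatorname{pm}\bigl(G[W]-V(R)\bigr)\qquad(|W|=2k),
\]
where $R$ runs over all subgraphs of $G[W]$ (including $R=\emptyset$, which produces the term $m_k(G)$) that are vertex-disjoint unions of even cycles. Since $G[W]$ again has each edge in at most one even cycle, it is enough to prove the following core claim: for every graph $H$ with that property, $\sum_{R}(-1)^{c(R)}\operatorname{pm}(H-V(R))=1$ if $\operatorname{pm}(H)\ge1$ and $=0$ otherwise.

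To prove the core claim, merge each pair $(R,M)$ — with $R$ a vertex-disjoint union of even cycles in $H$ and $M$ a perfect matching of $H-V(R)$ — into the spanning subgraph $R\cup M$ of $H$. This identifies $\sum_R(-1)^{c(R)}\operatorname{pm}(H-V(R))$ with $p(H):=\sum_{L}(-1)^{c(L)}$, where $L$ ranges over the spanning subgraphs of $H$ whose components are single edges or even cycles and $c(L)$ is the number of cyclic components of $L$. Now induct on the number of even cycles of $H$. If $H$ has none, then every such $L$ is a perfect matching of $H$; since the symmetric difference of two distinct perfect matchings is a nonempty disjoint union of even cycles, $\operatorname{pm}(H)\le1$, and so $p(H)=\operatorname{pm}(H)=[\operatorname{pm}(H)\ge1]$.

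For the inductive step, fix an even cycle $C$ of $H$ and an edge $e=\{u,v\}$ of $C$; by hypothesis $C$ is the only even cycle of $H$ through $e$. Partitioning the subgraphs $L$ above according to whether $e\notin L$, or $e$ is an edge-component of $L$, or $e$ lies on a cyclic component of $L$ (which then must be $C$) is sign-respecting and yields the recursion $p(H)=p(H-e)+p(H-u-v)-p(H-V(C))$. Each of $H-e$, $H-u-v$, $H-V(C)$ has strictly fewer even cycles and inherits the hypothesis, so by induction $p(H)=[\operatorname{pm}(H-e)\ge1]+[\operatorname{pm}(H-u-v)\ge1]-[\operatorname{pm}(H-V(C))\ge1]$.

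It remains to match this with $[\operatorname{pm}(H)\ge1]$. By the standard deletion identity $\operatorname{pm}(H)=\operatorname{pm}(H-e)+\operatorname{pm}(H-u-v)$ and inclusion--exclusion, the desired equality is equivalent to
\[
\operatorname{pm}(H-V(C))\ge1\iff \operatorname{pm}(H-e)\ge1\ \text{and}\ \operatorname{pm}(H-u-v)\ge1.
\]
The implication "$\Rightarrow$" is immediate: a perfect matching of $H-V(C)$ extends to one of $H-e$ by adjoining the perfect matching of $C$ that avoids $e$, and to one of $H-u-v$ by adjoining a perfect matching of the path $C-u-v$. The implication "$\Leftarrow$" is the heart of the argument and the step I expect to be the main obstacle. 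Given perfect matchings $M_1$ of $H-e$ and $M_2$ of $H-u-v$, the subgraph $M_1\triangle M_2$ has every vertex of even degree except $u$ and $v$, which have degree one; hence it is a disjoint union of even cycles (alternating with respect to $M_1,M_2$) together with a single path $P$ from $u$ to $v$ whose edges alternate between $M_1$ and $M_2$, beginning and ending with $M_1$. Then $P+e$ is an even cycle of $H$ through $e$, so $P+e=C$; thus $V(P)=V(C)$, the $M_1$-edges along $P$ constitute a perfect matching of $G[V(C)]$, and — $M_1$ being a matching — every $M_1$-edge meeting $V(C)$ lies within $V(C)$. Therefore the restriction of $M_1$ to $V(H)\setminus V(C)$ is a perfect matching of $H-V(C)$. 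This establishes the equivalence, completes the induction, and thereby proves the lemma.
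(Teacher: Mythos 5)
Your proposal is correct, and it reaches the lemma by a genuinely different route than the paper. Both arguments group matchings by their vertex support and both ultimately exploit symmetric differences of matchings together with the hypothesis that an edge lies on at most one even cycle; but the paper works directly with the set $M(V)$ of all $k$-matchings having a fixed vertex set $V$: its Claims 1--3 show that the even cycles arising from symmetric differences within $M(V)$ are pairwise vertex-disjoint, that each matching restricts to one of the two perfect matchings on each such cycle, and that all remaining edges are forced, whence $|M(V)|=2^{r(V)}$; the count is then finished with the identity $2^r+\sum_{s\ge 1}(-1)^s\binom{r}{s}2^{r-s}=1$. You instead reduce to a per-support identity, reinterpret the signed sum as $p(H)=\sum_L(-1)^{c(L)}$ over spanning subgraphs whose components are single edges or even cycles, derive the three-term recursion $p(H)=p(H-e)+p(H-u-v)-p(H-V(C))$ at an edge $e$ of an even cycle $C$ (the hypothesis forcing the cyclic component through $e$ to be $C$), and induct on the number of even cycles; the only substantive input is the equivalence $\operatorname{pm}(H-V(C))\ge 1 \Leftrightarrow \operatorname{pm}(H-e)\ge 1 \text{ and } \operatorname{pm}(H-u-v)\ge 1$, which your alternating-path argument proves correctly from the same hypothesis. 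The paper's approach buys the sharper structural statement $|M(V)|=2^{r(V)}$ (matchings with fixed support form a switching class along disjoint even cycles), which makes the enumeration transparent and is reused implicitly in the inclusion--exclusion; your approach needs only existence statements, avoids the binomial identity, and its deletion-type recursion is close in spirit to the $\mu$-polynomial recursions invoked later in Section 5, so it fits naturally alongside that material. Two cosmetic points: $G[V(C)]$ in your last paragraph should read $H[V(C)]$, and note that the paper's $\mathcal{R}_2'(G)$ consists of nonempty unions of cycles, which you handle consistently by folding the $m_k(G)$ term into the $R=\emptyset$ term.
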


\begin{proof}
Let $V=\{v_{i_1},\ldots , v_{i_{2k}}\}$ be an element of $M(G,k)$
and let $M(V)$ be the set of all $k$-matchings of $G$ whose vertex set is $V$.
Given $M, M' \in M(V)$, let $G_{M, M'}$ be the subgraph of $G$ whose edge set is $(M \cup M') \setminus (M \cap M')$.
It then follows that $G_{M, M'}$ is a regular bipartite graph of degree 2,
and hence belongs to ${\mathcal R}_2' (G)$.
Let ${\rm EC}(V)$ be the set of all even cycles $C$ of $G$
satisfying that there exist $M_1 ,M_2 \in M(V)$ such that $G_{M_1, M_2}$ 
contains $C$.

\bigskip

\noindent{\bf Claim 1.}
The set ${\rm EC}(V)$ consists of pairwise vertex-disjoint even cycles.

Suppose that distinct even cycles $C, C' \in {\rm EC}(V)$ have a common vertex $v$.
Then there exists  $M_1,M_2,M_3,M_4 \in M(V)$ 
such that $C$ appears in $G_{M_1, M_2}$ and $C'$ appears in $G_{M_3, M_4}$.
Since  $C$ appears in $G_{M_1, M_2}$, $M_1 \cap C$ has an edge of the form $\{v, w\}$.
Similarly, since $C'$ appears in $G_{M_3, M_4}$, $M_3 \cap C'$ has an edge of the form $\{v, w'\}$.
Since $C$ and $C'$ have no common edges, $\{v, w\}$ (resp. $\{v, w'\}$) is not an edge of $C'$ (resp. $C$).
It then follows that both $\{v, w\}$ and $\{v, w'\}$ appear in $G_{M_1, M_3}$.
Hence there exists an even cycle $C''$ of $G$ that contains $\{v, w\}$ and $\{v, w'\}$.
This contradicts to the hypothesis that 
 each edge of $G$ belongs to at most one even cycle of $G$.
Thus ${\rm EC}(V)$ consists of pairwise vertex-disjoint even cycles.

\bigskip

\noindent{\bf Claim 2.}
If $M \in M(V)$ and $C=(\{v_1,v_2\},\ldots,\{v_{2\ell-1},v_{2\ell}\},\{ v_{2\ell}, v_1\}) \in {\rm EC}(V)$,
then $M \cap C$ is either $\{  \{v_1,v_2\},\{v_3,v_4\},\ldots,\{v_{2\ell-1},v_{2\ell}\} \}$
or 
$\{ \{v_2,v_3\},\{v_4,v_5\},\ldots,\{ v_{2\ell}, v_1\} \}$.

Since $C$ belongs to ${\rm EC}(V)$, there exists $M_1, M_2 \in M(V)$ such that
$C$ appears in the graph $G_{M_1,M_2}$.
Suppose that there exists $j$ such that neither $\{v_{j-1},v_j\}$ nor $\{v_j, v_{j+1}\}$ belongs to $M$.
Then $M$ has an edge $\{v_j , w\}$ with $w \notin \{v_{j-1}, v_{j+1}\}$.
We may assume that $\{v_{j-1},v_j\}$ belongs to $M_1$.
Then $G_{M,M_1}$ has edges $\{v_{j-1},v_j\}$ and $\{v_j , w\}$.
Hence there exists an even cycle of $G$ that contains $\{v_{j-1},v_j\}$ and $\{v_j , w\}$,
a contradiction.
Thus one of $\{v_{j-1},v_j\}$ or $\{v_j, v_{j+1}\}$ belongs to $M$.
Hence the intersection of $C$ and $M$ is either 
$\{  \{v_1,v_2\},\{v_3,v_4\},\ldots,\{v_{2\ell-1},v_{2\ell}\} \}$
or 
$\{ \{v_2,v_3\},\{v_4,v_5\},\ldots,\{ v_{2\ell}, v_1\} \}$.

\bigskip

\noindent{\bf Claim 3.}
If an edge $e$ does not appear in any cycle in ${\rm EC}(V)$,
then either $e \in M$ for all $M \in M(V)$ or  $e \notin M$ for all $M \in M(V)$.

Suppose that $e$ belongs to $M \in M(V)$ and does not belong to $M' \in M(V)$.
Then $e$ belongs to $(M \cup M') \setminus (M \cap M')$ and hence
appears in the graph $G_{M,M'}$.
Thus there exists an even cycle $C \in {\rm EC}(V)$ such that $e \in C$.

\bigskip

Let $r =r(V)$ denote the number of cycles in $ {\rm EC}(V) $ and let $ {\rm EC}(V) = \{ C_1,\ldots, C_r \}$.
Let
$E'$ be the set of all edges $e$ of $G$ such that 
$e \in M$ for all $M \in M(V)$.
From Claims 1--3,
$$
M(V) = \{
E' \cup M_1 \cup \cdots \cup M_r : 
M_i \mbox{ is one of two perfect matchings of } C_i
\}.
$$
Thus we have $|M(V)| =2^r$.
On the other hand, for $R=C_{i_1} \cup \cdots \cup C_{i_s} \in {\mathcal R}_2' (G)$
with $\{C_{i_1},\ldots,C_{i_s}\} \subset {\rm EC}(V)$, 
the number of $(k-|E(R)|/2)$-matchings of $G-R$ with vertex set $V \setminus V(R)$ is $ |M(V \setminus V(R))| = 2^{r-s}$.
Since
$$
2^r + \sum_{s=1}^{r} (-1)^s \binom{r}{s} 2^{r-s}
=1,
$$
we have 
\begin{eqnarray*}
|M(G,k)| &=& \sum_{V \in M(G,k)} 1\\
&=&\left( \sum_{V \in M(G,k)} 2^{r(V)} \right)
+ \left( \sum_{V \in M(G,k)} \sum_{s=1}^{r(V)} (-1)^s \binom{r(V)}{s} 2^{r(V)-s} \right)\\
&=& \left(\sum_{V \in M(G,k)} |M(V)| \right)+\left(\sum_{V \in M(G,k)} \sum_{
\substack{R \in {\mathcal R}_2' (G)\\R \subset\bigcup_{C \in {\rm EC}(V)} C}} (-1)^{c(R)} |M(V \setminus V(R))| \right)\\
&=&\left( \sum_{V \in M(G,k)} |M(V)|\right) +
\left(\sum_{R \in {\mathcal R}_2' (G)} (-1)^{c(R)}
\sum_{
\substack{
{V \in M(G,k)}\\\bigcup_{C \in {\rm EC}(V)} C \supset R
}
}
 |M(V \setminus V(R))| \right)\\
&=& m_k(G) +\sum_{R \in {\mathcal R}_2' (G)} (-1)^{c(R)} m_{k-|E(R)|/2} \left(G- R \right),
\end{eqnarray*}
as desired.
\end{proof}

Now, we prove Theorem \ref{formulaA}.
In fact, Theorem \ref{formulaA} follows from the following more general result.
\begin{theorem}
\label{formulaAgen}
Let $G$ be a graph such that each edge of $G$ belongs to at most one even cycle of $G$.
Then one has
$$
\gamma(\Ac_{\widehat{G}},x)=g(G,2x) + \sum_{R \in {{\mathcal R}_2'}(G)} (-2)^{c(R)} g (G-R , 2x) \ 
x^{\frac{|E(R)|}{2} }.
$$
	Moreover, the normalized volume of ${\mathcal A}_{\widehat{G}}$ is 
$$
2^n g (G,  1/2) + \sum_{R \in {{\mathcal R}_2'}(G)} (-2)^{c(R)} 2^{n-|E(R)|} g (G-R ,1/2).
$$
\end{theorem}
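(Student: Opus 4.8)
The plan is to substitute Proposition~\ref{prop:matchinginterior} and Lemma~\ref{klemma} into Theorem~\ref{thm:suspinterior} and then reorganize the resulting sum over cuts by a double-counting argument. Since $E_S=E_{[n]\setminus S}$, Theorem~\ref{thm:suspinterior} can be rewritten as
\[
\gamma(\Ac_{\widehat G},x)=\frac{1}{2^{n}}\sum_{S\subseteq[n]}I_{\widetilde{E_S}}(4x).
\]
Each cut $E_S$ is a bipartite graph, so Proposition~\ref{prop:matchinginterior} gives $I_{\widetilde{E_S}}(4x)=\sum_{k\ge0}|M(E_S,k)|(4x)^k$. Moreover $E_S$ is a (spanning) subgraph of $G$ and hence inherits the hypothesis that each edge lies in at most one even cycle (two even cycles of $E_S$ through a common edge would be even cycles of $G$ as well), and being bipartite its even cycles are precisely its cycles; so Lemma~\ref{klemma} applies to each $E_S$ and gives $|M(E_S,k)|=m_k(E_S)+\sum_{R\in{\mathcal R}_2'(E_S)}(-1)^{c(R)}m_{k-|E(R)|/2}(E_S-R)$.

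I would then handle the two resulting contributions separately. For the first, the elementary point is that a fixed $k$-matching of $G$ lies in the cut $E_S$ exactly when each of its $k$ edges has precisely one endpoint in $S$; there are $2^{k}$ such choices on the $2k$ matched vertices and $2^{n-2k}$ on the remaining ones, so each $k$-matching of $G$ is contained in exactly $2^{n-k}$ cuts. Hence $\sum_{S\subseteq[n]}m_k(E_S)=2^{n-k}m_k(G)$, and the first contribution collapses to $\frac{1}{2^{n}}\sum_{k}2^{n-k}m_k(G)(4x)^{k}=\sum_{k}m_k(G)(2x)^{k}=g(G,2x)$.

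For the second contribution I would interchange the order of summation, fixing $R\in{\mathcal R}_2'(G)$ and summing over the cuts $E_S$ that contain $R$. A vertex-disjoint union of cycles $R$ is a subgraph of $E_S$ precisely when, for each cycle $C$ of $R$, the set $S\cap V(C)$ is one of the two parts of the bipartition of $C$; this gives $2^{c(R)}$ admissible choices for $S\cap V(R)$, while $S':=S\setminus V(R)$ ranges freely, and one checks directly that $E_S-R=E_{S'}(G-R)$, where $G-R$ denotes the induced subgraph on $[n]\setminus V(R)$ (so $|V(G-R)|=n-|E(R)|$). Writing $k=j+|E(R)|/2$, pulling out the factor $(4x)^{|E(R)|/2}=2^{|E(R)|}x^{|E(R)|/2}$, and applying the identity of the previous paragraph to $G-R$ in the form $\sum_{S'}g(E_{S'}(G-R),4x)=2^{\,n-|E(R)|}g(G-R,2x)$, all the powers of $2$ cancel against $1/2^{n}$ and the second contribution becomes $\sum_{R\in{\mathcal R}_2'(G)}(-2)^{c(R)}g(G-R,2x)\,x^{|E(R)|/2}$. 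Adding the two contributions proves the formula for $\gamma(\Ac_{\widehat G},x)$.

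For the normalized volume I would use that $\Ac_{\widehat G}$ is a reflexive polytope of dimension $n$ (as $\widehat G$ is connected on $n+1$ vertices), so $h^*(\Ac_{\widehat G},x)=\sum_{i}\gamma_i x^{i}(1+x)^{n-2i}$; evaluating at $x=1$ and recalling that $h^*(\Ac_{\widehat G},1)={\rm Vol}(\Ac_{\widehat G})$ gives ${\rm Vol}(\Ac_{\widehat G})=2^{n}\,\gamma(\Ac_{\widehat G},1/4)$, and substituting $x=1/4$ into the formula just proved yields the stated expression. The main obstacle is the bookkeeping in the double-counting step — identifying ${\mathcal R}_2'(E_S)$ as exactly those $R\in{\mathcal R}_2'(G)$ whose cycles are compatible with the bipartition induced by $S$, and checking that the restriction $E_S-R$ is again the cut $E_{S'}$ of the vertex-deleted graph $G-R$. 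Once these identifications are in place, the rest is a routine manipulation of the matching generating polynomials.
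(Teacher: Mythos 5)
Your proposal is correct and follows essentially the same route as the paper: it combines Theorem~\ref{thm:suspinterior}, Proposition~\ref{prop:matchinginterior} and Lemma~\ref{klemma}, then double-counts the cuts containing a fixed matching (respectively a fixed matching together with a union of even cycles $R$), and finally evaluates at $x=1/4$ via the $\gamma$-expansion to get the volume. The only cosmetic differences are that you sum over all $2^n$ subsets $S$ with the factor $1/2^n$ instead of over the $2^{n-1}$ cuts, and you package the second count as $E_S-R=E_{S'}(G-R)$ before reusing the matching identity, which matches the paper's count of $2^{n-k-1-|E(R)|/2+c(R)}$ cuts.
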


\begin{proof}
Let $n$ be the number of vertices of $G$.
From Theorem \ref{thm:suspinterior} and Proposition \ref{prop:matchinginterior}
one has
$$
\gamma(\Ac_{\widehat{G}},x)=\frac{1}{2^{n-1}} \sum_{H \in {\rm Cut}(G)} I_{\widetilde{H}} (4x)
=  \frac{1}{2^{n-1}} \sum_{H \in {\rm Cut}(G)} \sum_{k \ge 0} |M(H,k)|\  (4x)^k.
$$
Since each edge of $G$ belongs to at most one even cycle of $G$,
each $H \in {\rm Cut}(G)$ satisfies the same condition.
From Lemma~\ref{klemma}, 
$$
|M(H, k)| =m_k(H)+
\sum_{R \in {\mathcal R}_2' (H)} (-1)^{c(R)} m_{k-|E(R)|/2} \left(H- R \right),
$$
for each $H \in {\rm Cut}(G)$.
Thus the $\gamma$-polynomial of ${\mathcal A}_{\widehat{G}}$ is 
\begin{eqnarray*}
 & & 
\frac{1}{2^{n-1}} \sum_{H \in {\rm Cut}(G)} \sum_{k \ge 0}  m_k(H) \  (4x)^k\\
&+&
\frac{1}{2^{n-1}} \sum_{H \in {\rm Cut}(G)} \sum_{k \ge 0} \sum_{R \in {\mathcal R}_2' (H)} (-1)^{c(R)} m_{k-|E(R)|/2} \left(H- R \right) \  (4x)^k.
\end{eqnarray*}
Note that every $k$-matching of $H \in {\rm Cut}(G)$
is a $k$-matching of $G$.
\begin{itemize}
\item
Let $M$ be a $k$-matching of $G$.
Then $M$ is a $k$-matching of $H \in {\rm Cut}(G)$
if and only if $M$ is a subgraph of $H$.
There are $2^{n-k-1}$ such $H \in {\rm Cut}(G)$.
\item
Let $M$ be a $(k-|E(R)|/2)$-matching of $G-R$ with $R \in {\mathcal R}_2' (G)$.
Then $M$ is a matching of $H \in {\rm Cut}(G)$ with $R \in {\mathcal R}_2' (H)$
if and only if $M  \cup R$ is a subgraph of $H$.
There are 
$$
2^{n - |E(R)|-1- (k-|E(R)|/2) +c(R)} 
=
2^{n-k-1-|E(R)|/2 +c(R)} 
$$
cuts $H \in {\rm Cut}(G)$
such that $M \cup R$ is a subgraph of $H$.
\end{itemize}
Thus $\gamma(\Ac_{\widehat{G}},x)$ is equal to 
\begin{eqnarray*}
 & & 
\frac{1}{2^{n-1}} \sum_{k \ge 0} 2^{n-k-1} m_k(G) \  (4x)^k\\
&+&
\frac{1}{2^{n-1}} \sum_{k \ge 0} \sum_{R \in {\mathcal R}_2' (G)} 2^{n-k-1-\frac{|E(R)|}{2}+c(R)} (-1)^{c(R)} m_{k-|E(R)|/2} \left(G- R \right) \  (4x)^k\\
&=&\sum_{k \ge 0} m_k(G) \  (2x)^k +
\sum_{R \in {\mathcal R}_2' (G)} (-2)^{c(R)} \sum_{k \ge 0} 
m_{k-|E(R)|/2} \left(G- R \right) \  (2x)^{k-\frac{|E(R)|}{2}} x^{\frac{|E(R)|}{2}}\\
&=& g(G, 2x) +
\sum_{R \in {{\mathcal R}_2'}(G)} (-2)^{c(R)} g (G-R , 2x) \ 
x^{\frac{|E(R)|}{2} }.
\end{eqnarray*}
Moreover, since $h^*({\mathcal A}_{\widehat{G}},x)
=
(x+1)^n \gamma(\Ac_{\widehat{G}},x/(x+1)^2)$,
the normalized volume of ${\mathcal A}_{\widehat{G}}$ is 
$$
h^*({\mathcal A}_{\widehat{G}},1)
=
2^n \gamma(\Ac_{\widehat{G}},1/4)
=
2^n g (G,  1/2) + \sum_{R \in {{\mathcal R}_2'}(G)} (-2)^{c(R)} 2^{n-|E(R)|} g (G-R ,1/2).
$$
\end{proof}

Since every matching generating polynomial is real-rooted, we obtain the following.
\begin{corollary}
\label{cor:noeven}
Let $G$ be a finite graph with $n$ vertices.
If $G$ has no even cycles, then the $\gamma$-polynomial of the $h^*$-polynomial of 
${\mathcal A}_{\widehat{G}}$ is $g(G,2x)$.
	In particular, $	h^*({\mathcal A}_{\widehat{G}}, x)$ is real-rooted.
	Moreover, the normalized volume of ${\mathcal A}_{\widehat{G}}$ is $2^n g(G,1/2)$.
\end{corollary}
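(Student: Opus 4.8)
The plan is to read off the entire statement from Theorem~\ref{formulaAgen}, whose hypothesis holds here for a trivial reason: if $G$ has no even cycle, then \emph{every} edge of $G$ lies in at most one even cycle (there are none), so Theorem~\ref{formulaAgen} applies. The key point is then that the index set ${\mathcal R}_2'(G)$ of subgraphs formed by vertex-disjoint even cycles is empty, so the correction sum in Theorem~\ref{formulaAgen} contributes nothing. Hence $\gamma(\Ac_{\widehat{G}},x) = g(G,2x)$ and the normalized volume $h^*(\Ac_{\widehat{G}},1)$ equals $2^n g(G,1/2)$, which settles the first and last assertions.

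For real-rootedness of $h^*(\Ac_{\widehat{G}},x)$ I would argue as follows. Every symmetric edge polytope is reflexive, so $h^*(\Ac_{\widehat{G}},x)$ is palindromic and its $\gamma$-polynomial is well defined and, by the previous paragraph, equals $g(G,2x)=\sum_{k\ge 0} m_k(G)(2x)^k$; in particular all its coefficients are nonnegative, so $h^*(\Ac_{\widehat{G}},x)$ is $\gamma$-positive. By the observation recalled in Section~\ref{sec:Ehrhart} (\cite[Observation~4.2]{EulerianNumbers}), a $\gamma$-positive polynomial is real-rooted if and only if its $\gamma$-polynomial is. Finally, $g(G,2x)$ is real-rooted: by \cite[Theorem~5.5.1]{Gut} the matching polynomial $\alpha(G,x)$ is real-rooted for every graph $G$, and the identity $\alpha(G,x)=x^n g(G,-x^{-2})$ recalled above forces every root of $g(G,x)$, hence of $g(G,2x)$, to be real (and negative). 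Combining the last two facts gives that $h^*(\Ac_{\widehat{G}},x)$ is real-rooted.

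I do not expect a genuine obstacle: this corollary is a specialization of Theorem~\ref{formulaAgen} together with bookkeeping already in place. The only point deserving an explicit sentence is that ${\mathcal R}_2'(G)$ really is empty (rather than containing an ``empty subgraph'' that would spuriously double the leading term), which is forced by the convention in Lemma~\ref{klemma} that separates the term $m_k(G)$ from the sum over $R\in{\mathcal R}_2'(G)$.
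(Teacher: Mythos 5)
Your proposal is correct and follows the paper's own route: the paper obtains this corollary precisely by specializing Theorem~\ref{formulaAgen} (noting ${\mathcal R}_2'(G)=\emptyset$ when $G$ has no even cycles) and invoking the real-rootedness of matching generating polynomials together with the fact, recalled in Section~\ref{sec:Ehrhart}, that a $\gamma$-positive palindromic polynomial is real-rooted if and only if its $\gamma$-polynomial is. Your extra remark about the convention that the term $m_k(G)$ is kept separate from the sum over $R$ is a sensible clarification but does not change the argument.
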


An example of the suspension of a cactus graph is a wheel graph. In \cite{DDM}, the normalized volume of the symmetric edge polytope of a wheel graph was computed. By using Theorem~\ref{formulaA}, we compute the $\gamma$-polynomial of the $h^*$-polynomial and the normalized volume  of the polytope.

\begin{example}
\label{wheelexample}
Let $C_n$ be a cycle of length $n$.
Then $\widehat{C_n}$ is a wheel graph.
It is known that 
$$g(C_n, x)=L_n(x),$$ 
where
$L_n(x)$ is the {\em Lucas polynomial} defined by
$$
L_0(x) =2, \ L_1(x) = 1, \ L_d(x) = L_{d-1}(x)+xL_{d-2}(x),
$$
and 
\begin{eqnarray*}
g(C_n,2x) &=& L_n(2x) = \frac{
(1+\sqrt{1+8x})^n + (1-\sqrt{1+8x})^n
}{2^n}.
\end{eqnarray*}
See, e.g., \cite[p.27 and p.36]{SW}.
From Theorem \ref{formulaA}
one has
$$
\gamma(\Ac_{\widehat{C_n}},x)=
\left\{
\begin{array}{ll}
 \frac{
(1+\sqrt{1+8x})^n + (1-\sqrt{1+8x})^n
}{2^n} & \mbox{if } n \mbox{ is odd,}\\
\\
 \frac{
(1+\sqrt{1+8x})^n + (1-\sqrt{1+8x})^n
}{2^n}  -2 x^{\frac{n}{2} } & \mbox{otherwise}.
\end{array}
\right.
$$
In particular, we obtain $$
\Vol(\Ac_{\widehat{C_n}})=
2^n \gamma(\Ac_{\widehat{C_n}},1/4)
=
 \left\{
\begin{array}{ll}
(1+\sqrt{3})^n + (1-\sqrt{3})^n & \mbox{if } n \mbox{ is odd,}\\
\\
(1+\sqrt{3})^n + (1-\sqrt{3})^n -2& \mbox{otherwise}.
\end{array}
\right.
$$
This coincides with \cite[Theorem 4.24]{DDM}.
\end{example}

\section{Real-rootedness of $h^*(\Ac_{\widehat{G}},x)$}
In this section, we prove Theorem \ref{thm:realrooted}.
The polynomial
$$
\gamma(G, x):=
g(G,2x) + \sum_{R \in {{\mathcal R}_2'}(G)} (-2)^{c(R)} g (G-R , 2x) \ 
x^{\frac{|E(R)|}{2} }
$$
is strongly related with 
the {\em $\mu$-polynomial} in methods from  chemical graph theory.
Suppose that $G$ has $n$ vertices and $r$ cycles $C_1, \ldots, C_r$.
Let ${\bf t}=(t_1,\ldots, t_r) \in \RR^r$ be a vector whose 
component $t_i$ is associated to the cycle $C_i$ for $i=1,2,\ldots,r$.
It is known \cite[Proposition 1a]{GP} that
the  $\mu$-polynomial $\mu(G, {\bf t}, x) $ of a graph $G$ satisfies
$$
\mu(G, {\bf t}, x) = \alpha(G,x) +
\sum_{R \in {{\mathcal R}_2}(G)} (-2)^{c(R)} \alpha(G-R, x) \prod_{C_i \subset R} t_i,
$$
where 
${\mathcal R}_2 (G)$ is the set of all subgraphs of $G$ 
consisting of vertex-disjoint cycles,
$c(R)$ is the number of the cycles of $R$.
This polynomial generalizes important graph polynomials.
In fact, we have $\mu(G, {\bf 0}, x) = \alpha(G,x)$ and $\mu(G, {\bf 1}, x) = \phi(G,x)$,
where $ \phi(G,x)$ is the {\em characteristic polynomial} of $G$.
See \cite[Theorem 5.3.3]{Gut}.
For a cactus graph, the following is known.

\begin{proposition}[{\cite[Proposition 5]{GP}}]
\label{cactusreal}
Let $G$ be a cactus graph.
Then $\mu(G, {\bf t}, x) $ is real-rooted if $|t_i| \le 1$ for all $1 \le i \le r$.
\end{proposition}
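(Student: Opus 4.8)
The plan is to realize $\mu(G,{\bf t},x)$, under the hypothesis $|t_i|\le 1$, as the characteristic polynomial of a Hermitian matrix, so that real-rootedness becomes an immediate consequence of the spectral theorem. The only property of a cactus that enters is that, since each edge lies in at most one cycle, the simple cycles of $G$ are exactly $C_1,\dots,C_r$; hence any collection of pairwise vertex-disjoint simple cycles of $G$ is precisely an element $R\in{\mathcal R}_2(G)$, and $\mathcal{R}_2(G)$ is exactly what governs the determinant expansion of a matrix supported on $E(G)$.

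Concretely, I would first set $\psi_i=\arccos t_i$, a well-defined real number exactly because $|t_i|\le 1$. Then, choosing one edge $\{u_i,v_i\}$ of each cycle $C_i$, I define a Hermitian $n\times n$ matrix $H=H(t_1,\dots,t_r)$ by $H_{u_i,v_i}=e^{\sqrt{-1}\,\psi_i}$, $H_{v_i,u_i}=e^{-\sqrt{-1}\,\psi_i}$ for $1\le i\le r$, by $H_{u,v}=H_{v,u}=1$ for every other edge $\{u,v\}$ of $G$, and by $H_{u,v}=0$ otherwise (in particular on the diagonal). By construction $H$ is Hermitian, every nonzero off-diagonal entry has modulus $1$, and the product of the entries of $H$ read around $C_i$ in the direction $u_i\to v_i$ equals $e^{\sqrt{-1}\psi_i}$, and $e^{-\sqrt{-1}\psi_i}$ in the reverse direction.

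The key step is the identity $\det(xI-H)=\mu(G,{\bf t},x)$. Expanding $\det(xI-H)=\sum_{\sigma\in S_n}\operatorname{sgn}(\sigma)\prod_{w}(xI-H)_{w,\sigma(w)}$ and grouping the permutations by their cycle decomposition: a fixed point of $\sigma$ contributes a factor $x$; a transposition $(uv)$ contributes $-H_{uv}H_{vu}$, which vanishes unless $\{u,v\}\in E(G)$ and otherwise equals $-1$; an $\ell$-cycle of $\sigma$ contributes $-\prod_w H_{w,\sigma(w)}$ along that cycle, which vanishes unless the underlying closed walk is a simple cycle of $G$. As $G$ is a cactus this simple cycle is one of the $C_i$, and its two traversal directions contribute $-\bigl(e^{\sqrt{-1}\psi_i}+e^{-\sqrt{-1}\psi_i}\bigr)=-2\cos\psi_i=-2t_i$ together. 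Thus every nonzero $\sigma$ corresponds to a matching $M$ of $G$ together with a vertex-disjoint $R\in{\mathcal R}_2(G)$ (each cycle of $R$ traversed in one of two directions), and collecting terms yields
\begin{align*}
\det(xI-H)
&=\sum_{R\in{\mathcal R}_2(G)}\Bigl(\prod_{C_i\subset R}(-2t_i)\Bigr)\sum_{M}(-1)^{|M|}x^{\,n-|V(R)|-2|M|}\\
&=\sum_{R\in{\mathcal R}_2(G)}(-2)^{c(R)}\Bigl(\prod_{C_i\subset R}t_i\Bigr)\alpha(G-R,x),
\end{align*}
where the inner sum is over matchings $M$ of $G-R$; by the formula of Gutman--Polansky recalled above, the right-hand side is exactly $\mu(G,{\bf t},x)$. (For ${\bf t}={\bf 1}$ one has $\psi_i=0$, $H$ is the ordinary adjacency matrix, and this is the classical expansion of $\phi(G,x)$; for ${\bf t}={\bf 0}$ one has $\psi_i=\pi/2$, the cycle contributions vanish, and one recovers $\alpha(G,x)$.) Since $H$ is Hermitian, $\det(xI-H)$ has only real roots, so $\mu(G,{\bf t},x)$ is real-rooted, which is the assertion.

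The main obstacle I anticipate is making the determinant bookkeeping fully rigorous: one must argue carefully that in a cactus every cyclic component of a contributing permutation $\sigma$ arises from one of the blocks $C_i$ traversed in one of its two directions, and that the signs $(-1)^{\ell-1}$ from $\operatorname{sgn}(\sigma)$ and $(-1)^{\ell}$ from the product of $-H$-entries combine to produce precisely the factor $-2t_i$ per cycle and $-1$ per matched edge. A matrix-free alternative is induction on $r$ via the deletion recursion $\mu(G,{\bf t},x)=\mu(G-e,{\bf t}',x)-\mu(G-u-v,{\bf t}',x)-2t_C\,\mu(G-V(C),{\bf t}'',x)$ for an edge $e=\{u,v\}$ of a cycle $C$ lying in an end-block of $G$ with $\deg_G u=\deg_G v=2$, together with a Heilmann--Lieb--Godsil style interlacing argument; but the Hermitian construction is shorter, and it is exactly there that the hypothesis $|t_i|\le 1$ is used, through $\arccos t_i\in\RR$.
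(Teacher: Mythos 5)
The paper does not prove this proposition at all: it is quoted verbatim from Gutman--Polansky \cite{GP}, so there is no in-paper argument to compare against. Your proposal is a correct, self-contained proof, and it reconstructs exactly the mechanism behind the cited chemical-graph-theory result: writing $t_i=\cos\psi_i$ (possible precisely because $|t_i|\le 1$) and putting the phase $e^{\sqrt{-1}\,\psi_i}$ on a single edge of each cycle produces a Hermitian matrix whose characteristic polynomial is $\mu(G,{\bf t},x)$, whence real-rootedness. The determinant bookkeeping you worried about does go through: in a cactus the chosen edges $\{u_i,v_i\}$ are distinct (each edge lies in at most one cycle), every permutation cycle of length $\ge 3$ with nonzero contribution traces a simple cycle of $G$, hence one of the $C_i$, and the two orientations combine to $-(e^{\sqrt{-1}\psi_i}+e^{-\sqrt{-1}\psi_i})=-2t_i$, while each transposition contributes $-|H_{uv}|^2=-1$; this yields $\det(xI-H)=\sum_{R}(-2)^{c(R)}\bigl(\prod_{C_i\subset R}t_i\bigr)\alpha(G-R,x)$, with the $R=\emptyset$ term giving $\alpha(G,x)$, which is the Gutman--Polansky expansion of $\mu$. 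The only cosmetic blemish is notational (whether $\emptyset\in{\mathcal R}_2(G)$, so that $\alpha(G,x)$ is the empty term of the sum rather than a separate summand); the argument itself is sound.
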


Thus we can prove Theorem \ref{thm:realrooted}.

\begin{proof}[Proof of Theorem \ref{thm:realrooted}]
It is enough to show that $\gamma(G,x)$ is real-rooted.
The polynomial $\gamma(G, x)$ satisfies
\begin{eqnarray*}
x^n \gamma \left(G, -\frac{1}{2x^2} \right)
& = &
x^n g(G,-x^{-2} ) + \sum_{R \in {{\mathcal R}_2'}(G)} (-2)^{c(R)} x^n g (G-R , -x^{-2}) \ 
\left(-\frac{1}{2x^2}\right)^{\frac{|E(R)|}{2} }\\
& = &
\alpha(G,x) +  \sum_{R \in {{\mathcal R}_2'}(G)} (-2)^{c(R)} \alpha (G-R , x) \ 
\prod_{C_i \subset R} \left(-\frac{1}{2}\right)^{\frac{|E(C_i)|}{2} }\\
&=&
\mu(G, {\bf t}, x) ,
\end{eqnarray*}
where 
${\bf t}=(t_1,\ldots,t_r)$ with
$$
t_i=
\left\{
\begin{array}{cc}
\left(-\frac{1}{2}\right)^{\frac{|E(C_i)|}{2} } & \mbox{ if }  C_i \mbox{ is an even cycle},\\
\\
0 & \mbox{otherwise.}
\end{array}
\right.
$$
By Proposition~\ref{cactusreal}, this is real-rooted.
If $u$ is a root of $\gamma(G,x)$, then $u \neq 0$ and
$x=1/\sqrt{-2u}$ is a root of $\mu(G, {\bf t}, x)$.
Since $1/\sqrt{-2u}$ is real, so is $u$.
\end{proof}
From Theorem \ref{thm:realrooted} the $h^*$-polynomial of the symmetric edge polytope of a wheel graph, i.e., the suspension of a cycle is real-rooted. However, the $h^*$-polynomial of the symmetric edge polytope of a cycle is not always real-rooted.

\begin{example}
\label{ex:nonreal}
	Let $C_n$ be a cycle of length $n \geq 3$.
	Then from \cite[Proposition 5.7]{OTlocally} one has 
	\[
	\gamma(\Ac_{C_n},x)=\sum_{i=0}^{\left\lfloor \frac{n-1}{2} \right\rfloor} \binom{2i}{i} x^i.
	\]
	Hence $h^*(\Ac_{C_n},x)$ is $\gamma$-positive. 
	However, when $n=5$, the $\gamma$-polynomial $\gamma(\Ac_{C_5},x)=1+2x+6x^2$ is not real-rooted. Hence $h^*(\Ac_{C_5},x)$ is not real-rooted.
\end{example}

\section{Nevo--Petersen Conjecture for $h^*(\Ac_{\widehat{G}},x)$}
An $n$-dimensional simplicial complex $\Delta$ is said to be
\begin{itemize}
	\item \textit{flag} if all minimal non-faces of $\Delta$ contain only two elements;
	\item \textit{balanced} if there is a proper coloring of its vertices $c : V \to [n+1]$, where $V$ is the vertex set of $\Delta$. 
\end{itemize}
Frohmader showed that the $f$-vector of a flag simplicial complex coincides with that of a balanced simplicial complex.
Nevo and Petersen posed the following strengthening problem of Conjecture \ref{conj:NP}.

\begin{problem}[\cite{NP}]
The $\gamma$-polynomial of the $h$-polynomial of any flag triangulation of a sphere coincides with the $f$-polynomial of some flag simplicial complex.	
\end{problem}

In this section, we discuss this problem for $h^*(\Ac_{\widehat{G}},x)$. 
In particular, we prove Theorem~\ref{thm:flag_A}.
First, we recall that every flag simplicial complex arises from a finite simple graph. Let $G$ be a finite simple graph on $[n]$ with the edge set $E(G)$.
A subset $C$ of $[n]$ is called a \textit{clique} of $G$ if for all $i$ and $j$ belonging to $C$ with $i \neq j$,  one has $\{i,j\} \in E(G)$. The \textit{clique complex} of $G$ is the simplicial complex $\Delta(G)$ on $[n]$ whose faces are the cliques of $G$.
\begin{lemma}[{\cite[Lemma 9.1.3]{monomial}}]
	A simplicial complex $\Delta$ is flag if and only if $\Delta$ is the clique complex of a finite simple graph.
\end{lemma}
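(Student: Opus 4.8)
The plan is to prove the two implications separately, the bridge in both directions being the $1$-skeleton of $\Delta$: a simplicial complex is flag exactly when membership in the complex is detected pairwise, i.e.\ by the graph of its edges. So the natural candidate graph to associate to a flag complex is its $1$-skeleton, and the natural thing to check for a clique complex is that non-faces already contain non-edges.

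For the direction ``clique complex $\Rightarrow$ flag'', suppose $\Delta=\Delta(G)$ for a finite simple graph $G$ on $[n]$. Here one uses the standing convention implicit in the stated definition that the vertex set of $\Delta$ consists of all $i$ with $\{i\}\in\Delta$, so that every singleton is a face and every minimal non-face has at least two elements. Let $F$ be any non-face of $\Delta$, that is, $F$ is not a clique of $G$; then there are $i,j\in F$ with $i\neq j$ and $\{i,j\}\notin E(G)$, hence $\{i,j\}$ is itself a non-face of $\Delta(G)$, since a two-element clique is precisely an edge. Thus every non-face contains a non-face of size two, so every minimal non-face has size exactly two and $\Delta$ is flag.

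For the converse, suppose $\Delta$ is flag on vertex set $V$. I would take $G$ to be the graph on $V$ whose edges are the two-element faces of $\Delta$, i.e.\ the $1$-skeleton of $\Delta$, and claim $\Delta=\Delta(G)$. First, every face $F\in\Delta$ is a clique of $G$: any two-element subset of $F$ is a face of $\Delta$ because $\Delta$ is closed under passing to subsets, hence an edge of $G$. Conversely, let $C$ be a clique of $G$ and suppose for contradiction $C\notin\Delta$. Then $C$ contains a minimal non-face $F$ of $\Delta$, and by flagness $|F|=2$, say $F=\{i,j\}$. But $i,j\in C$ and $C$ is a clique, so $\{i,j\}\in E(G)$, which by the definition of $G$ means $\{i,j\}\in\Delta$, contradicting that $F$ is a non-face. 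Hence $C\in\Delta$, and therefore $\Delta=\Delta(G)$.

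Both directions are a direct unwinding of the definitions, so there is no genuine obstacle here; the only point requiring a little care is the bookkeeping of vertex sets together with the convention, built into the definition of ``flag'' used above, that singletons are always faces. That convention is exactly what excludes one-element minimal non-faces and makes ``size exactly two'' the right conclusion, and it is also what guarantees the edge set of the associated graph is well defined on the full vertex set of $\Delta$.
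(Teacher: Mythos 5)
Your proof is correct: both directions are the standard unwinding of the definitions via the $1$-skeleton, with the right care taken about singletons being faces so that minimal non-faces have size exactly two. The paper itself gives no proof but cites Herzog--Hibi (Lemma 9.1.3), and your argument is essentially the same as the one in that reference, so there is nothing to add.
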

Hence the $f$-polynomial of a flag simplicial complex can be computed by counting cliques of a graph. By considering the complement of a graph, we also can compute the $f$-polynomial of a flag simplicial complex by counting independent sets of a graph.
Let us denote by $\overline{G}$ the complement of $G$. A subset $S$ of $[n]$ is called an \textit{independent set} if for all $i$ and $j$ belonging to $S$ with $i \neq j$,  one has $\{i,j\} \notin E(G)$.  
Let $i_k$ denote the number of independent sets $S$ of $G$ such that $|S|=k$.
(We set $i_0=1$.)
The \textit{independence polynomial} of $G$ is
\[
i(G,x)=\sum_{k \geq 0} i_k x^k
.\]
Since a subset $S$ of $[n]$ is a clique of $G$ if and only if $S$ is an independent set of $\overline{G}$, 
the $f$-polynomial of $\Delta(G)$ is equal to $i(\overline{G},x)$.
By using this correspondence, we can prove that any matching generating polynomial coincides with the $f$-polynomial of a flag simplicial complex.
For a graph $G$ with the edge set $E(G)$, the \textit{line graph} $L(G)$ is the graph with vertex set $E(G)$ and such that vertices $e,f \in E(G)$ with $e \neq f$ are adjacent if and only if $e \cap f \neq \emptyset$.
\begin{proposition}
	\label{prop:matchflag}
	Let $G$ be a finite simple graph. Then the matching generating polynomial $g(G,x)$ of $G$ coincides with the $f$-polynomial of a flag simplicial complex.
\end{proposition}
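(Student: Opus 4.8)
The plan is to exhibit $g(G,x)$ as an independence polynomial and then invoke the clique-complex description recorded just above. First I would note that a set $M \subseteq E(G)$ of edges is a matching of $G$ exactly when no two of its elements share a vertex, and by the definition of the line graph this is precisely the condition that $M$ be an independent set of $L(G)$. Hence $m_k(G)$, the number of $k$-matchings of $G$, equals the number of $k$-element independent sets of $L(G)$, and therefore
\[
g(G,x)=i(L(G),x);
\]
the constant terms also agree, the empty edge set being simultaneously the unique $0$-matching of $G$ and the unique size-$0$ independent set of $L(G)$.

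Next I would invoke the correspondence between cliques of a graph and independent sets of its complement: for any finite simple graph $H$, the $f$-polynomial of the clique complex $\Delta(H)$ equals $i(\overline{H},x)$. Applying this with $H=\overline{L(G)}$ shows that $i(L(G),x)$ is the $f$-polynomial of the clique complex $\Delta(\overline{L(G)})$. Combining with the previous display gives that $g(G,x)$ is the $f$-polynomial of $\Delta(\overline{L(G)})$, and by \cite[Lemma 9.1.3]{monomial} a clique complex is always flag, so $\Delta(\overline{L(G)})$ is a flag simplicial complex whose $f$-polynomial is $g(G,x)$, as desired.

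There is no serious obstacle here; the whole content is the observation that matchings of $G$ are the same objects as independent sets of $L(G)$. The only point worth double-checking is the bookkeeping of indices: the size-$k$ faces of $\Delta(\overline{L(G)})$ are the $k$-cliques of $\overline{L(G)}$, i.e. the $k$-element independent sets of $L(G)$, i.e. the $k$-matchings of $G$, and the empty face accounts for the common constant term $1$. Once this identification is set up the proposition is immediate from the two facts already established in this section.
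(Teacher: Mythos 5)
Your argument is correct and follows the paper's own proof exactly: identify matchings of $G$ with independent sets of the line graph $L(G)$, so that $g(G,x)=i(L(G),x)$, and then realize this as the $f$-polynomial of the clique complex of $\overline{L(G)}$, which is flag. The extra bookkeeping you spell out (the empty face and the size-$k$ correspondence) is fine and only makes explicit what the paper leaves implicit.
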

\begin{proof}
 It follows that the matching generating polynomial $g(G,x)$ of $G$ and the independence polynomial $i(L(G),x)$ of the line graph $L(G)$ of $G$ are identical. Namely, one has $g(G,x)=i(L(G),x)$. 
Hence $g(G,x)$ is the $f$-polynomial of a flag simplicial complex
that is the clique complex of $\overline{L(G)}$.
\end{proof}

Moreover, by using the above correspondence, we prove the following proposition.

\begin{proposition}
	\label{fmx}
Suppose that $f(x)$ is the $f$-polynomial of a flag simplicial complex.
Then, for any $0 < m \in {\mathbb Z}$, 
there exists a flag simplicial complex whose $f$-polynomial is $f(mx)$.
\end{proposition}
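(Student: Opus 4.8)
The plan is to reduce to the combinatorial model provided by Proposition~\ref{prop:matchflag} and its proof, namely that a flag simplicial complex is the clique complex of a graph, equivalently the independence complex of the complementary graph. Write $f(x)=i(H,x)$ for a finite simple graph $H$, so that $f(x)$ counts the independent sets of $H$ by cardinality. I want to produce a graph $H_m$ with $i(H_m,x)=f(mx)$; then $\Delta(\overline{H_m})$ is the desired flag complex. The natural construction is a \emph{blow-up}: replace each vertex $v$ of $H$ by an independent set $B_v$ of $m$ new vertices (a ``cloud''), make the $m$ vertices inside each cloud pairwise non-adjacent, and join two vertices from different clouds $B_v,B_w$ exactly when $\{v,w\}\in E(H)$. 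Call this graph $H_m$.

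The key step is to check that the independent sets of $H_m$ are counted by $f(mx)$. An independent set $S$ of $H_m$ can meet each cloud $B_v$ in at most one vertex: two vertices of the same cloud are fine (they are non-adjacent), so actually $S$ may contain several vertices from one cloud. This is the point that needs care, and it is why one should instead take the clouds to be \emph{cliques} rather than independent sets. So I would revise the construction: inside each cloud $B_v$ put all $\binom{m}{2}$ edges, and between clouds $B_v,B_w$ put a complete bipartite graph iff $\{v,w\}\in E(H)$. Then an independent set $S$ of $H_m$ meets each cloud in at most one vertex, and the set of clouds it meets must be an independent set of $H$; conversely any independent set $T$ of $H$ of size $k$ lifts to exactly $m^k$ independent sets of $H_m$ (choose one of $m$ vertices in each cloud over $T$). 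Hence
\[
i(H_m,x)=\sum_{k\ge 0} i_k(H)\, m^k x^k = i(H,mx)=f(mx),
\]
where $i_k(H)$ is the number of size-$k$ independent sets of $H$. Therefore the clique complex $\Delta(\overline{H_m})$ is a flag simplicial complex whose $f$-polynomial is $f(mx)$, which is what we wanted.

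The main obstacle is precisely the one flagged above: getting the ``at most one vertex per cloud'' property to hold, which forces the clouds to be cliques in $H_m$ (equivalently, independent sets in $\overline{H_m}$, so that the replacement is the standard operation of substituting an edgeless graph for each vertex in $\overline{H}$). Once the construction is set up correctly the bijection $S \mapsto (\text{clouds met by } S)$ and the counting are routine. One should also note the trivial boundary case: the constant term is $i_0=1$ on both sides, consistent with $\emptyset$ being a face, and the hypothesis $m>0$ is used only to keep the coefficients of $f(mx)$ nonnegative so that the result is again a genuine $f$-polynomial. Finally, since every clique complex is flag by \cite[Lemma 9.1.3]{monomial}, $\Delta(\overline{H_m})$ is flag, completing the argument.
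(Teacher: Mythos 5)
Your proposal is correct and is essentially the paper's own argument: your revised blow-up graph $H_m$ (cliques of size $m$ inside each cloud, complete joins between clouds along edges of $H=\overline{G}$) is precisely the lexicographic product $\overline{G}[K_m]$ that the paper uses. The only difference is that you verify the identity $i(H_m,x)=i(H,mx)$ by a direct count of independent sets, whereas the paper obtains it by citing Lemma~\ref{lem:independence}, i.e.\ $i(G[H],x)=i(G,i(H,x)-1)$ with $H=K_m$.
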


In order to show this proposition, the following lemma is needed.
For two graphs $G$ and $H$ with the vertex sets $V(G)$ and $V(H)$ respectively, let $G[H]$ be the graph with the vertex set $V(G) \times V(H)$ and such that a vertex $(a,x)$ is adjacent to a vertex $(b,y)$ if and only if $a$ is adjacent to $b$ (in $G$) or $a=b$ and $x$ is adjacent to $y$ (in $H$).
The graph $G[H]$ is called \textit{lexicographic product} (or \textit{composition}) of $G$ and $H$.

\begin{lemma}[{\cite[Theorem 1]{BHN}}]
\label{lem:independence}
Let $G$ and $H$ be graphs. Then one has
\[
i(G[H],x)=i(G,i(H,x)-1).
\]	
\end{lemma}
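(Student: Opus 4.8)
The plan is to prove the identity by a direct combinatorial analysis of the independent sets of the lexicographic product $G[H]$, organized by their ``fibers'' over the vertices of $G$. Given an independent set $S \subseteq V(G)\times V(H)$ of $G[H]$, for each $a \in V(G)$ set $S_a := \{ x \in V(H) : (a,x) \in S\}$, and let $A := \{ a \in V(G) : S_a \neq \emptyset\}$ be the support of $S$. The whole argument rests on a clean characterization of which families $(S_a)_{a}$ arise this way.

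First I would establish that characterization: $S$ is an independent set of $G[H]$ if and only if $A$ is an independent set of $G$ and, for every $a \in A$, $S_a$ is an independent set of $H$. This is immediate from the definition of adjacency in $G[H]$. Two distinct vertices $(a,x),(b,y) \in S$ with $a \neq b$ are adjacent exactly when $a$ is adjacent to $b$ in $G$, so the absence of such edges is equivalent to $A$ being independent in $G$; two distinct vertices $(a,x),(a,y) \in S$ in the same fiber are adjacent exactly when $x$ is adjacent to $y$ in $H$, so the absence of such edges is equivalent to each $S_a$ being independent in $H$. In particular, distinct fibers impose no further constraint on each other beyond adjacency of their indices in $G$.

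Next I would translate this into generating functions. The characterization shows that prescribing an independent set $S$ of $G[H]$ is the same as prescribing an independent set $A$ of $G$ together with, for each $a \in A$, a \emph{nonempty} independent set $S_a$ of $H$, with $|S| = \sum_{a\in A}|S_a|$. The generating function counting nonempty independent sets of $H$ by cardinality is $i(H,x)-1$ (we subtract the term for the empty set). Hence
\[
i(G[H],x) \;=\; \sum_{\substack{A \subseteq V(G)\\ A \text{ independent in } G}} \prod_{a \in A} \bigl( i(H,x) - 1 \bigr)
\;=\; \sum_{\substack{A \subseteq V(G)\\ A \text{ independent in } G}} \bigl( i(H,x) - 1 \bigr)^{|A|}.
\]
Since $i(G,y) = \sum_{A} y^{|A|}$, where $A$ ranges over all independent sets of $G$, the right-hand side is precisely $i(G,y)$ evaluated at $y = i(H,x)-1$, giving $i(G[H],x) = i(G, i(H,x)-1)$. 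The empty independent set of $G[H]$ corresponds to $A = \emptyset$ and contributes the constant term $1$ on both sides, so no degenerate case is lost.

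The only delicate point — and the step I would be most careful about — is the adjacency characterization: one must verify both necessity (no forbidden edge appears within a fiber, nor between two fibers whose indices are adjacent in $G$) and sufficiency (these conditions genuinely suffice for independence of $S$). Once that bijective description is in hand, the passage to generating functions and the substitution $y = i(H,x)-1$ are entirely routine.
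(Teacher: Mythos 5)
Your argument is correct. Note, however, that the paper does not prove this lemma at all: it is imported verbatim from \cite{BHN} (Theorem 1), so there is no in-paper proof to compare against. What you have written is essentially the standard proof of that cited result, and it is complete: the characterization that $S\subseteq V(G)\times V(H)$ is independent in $G[H]$ if and only if its support $A$ is independent in $G$ and every fiber $S_a$ is independent in $H$ follows directly from the definition of adjacency in the lexicographic product, and the passage to generating functions, grouping independent sets by their support and weighting each $a\in A$ by the nonempty-independent-set polynomial $i(H,x)-1$ before substituting $y=i(H,x)-1$ into $i(G,y)=\sum_A y^{|A|}$, is carried out correctly, including the constant term coming from $A=\emptyset$. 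So your proposal supplies a self-contained elementary proof where the paper relies on a citation, which is a perfectly acceptable (indeed slightly stronger) route.
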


\begin{proof}[Proof of Proposition \ref{fmx}]
Let $\Delta$ be a flag simplicial complex whose $f$-polynomial is $f(x)$.
Then there exists a graph $G$ such that $\Delta$ 
is the clique complex of $G$.
Moreover,  $f(x)$ is the independence polynomial $i(\overline{G},x)$ of $\overline{G}$.
We consider the lexicographic product  $\overline{G}[K_m]$ of $\overline{G}$ and a complete graph $K_m$ of $m$ vertices.
It then follows from Lemma \ref{lem:independence} that
\[i(\overline{G}[K_m] ,x) = i(\overline{G}, i(K_m,x)-1)= i(\overline{G}, mx)=f(mx).\] 
Thus $f(mx)$ is the $f$-polynomial of a flag simplicial complex
that is the clique complex of the complement graph of $\overline{G}[K_m]$.
\end{proof}

Now, we prove Theorem \ref{thm:flag_A}.

\begin{proof}[Proof of Theorem \ref{thm:flag_A}]
Let $G$ be a graph which has no even cycles. 
From Corollary \ref{cor:noeven},  one has \[\gamma({\mathcal A}_{\widehat{G}},x)=g(G,2x).\]
Thus Propositions~\ref{prop:matchflag} and \ref{fmx} guarantee that $g(G,2x) $
is the $f$-polynomial of a flag simplicial complex.
\end{proof}

\begin{example}
Let $C_n$ be an odd cycle of length $n$.
It then follows from Example \ref{wheelexample} that
\[
\gamma(\Ac_{\widehat{C_n}},x)=g(C_n, 2x)=
 \frac{
(1+\sqrt{1+8x})^n + (1-\sqrt{1+8x})^n
}{2^n}.
\]
From Theorem~\ref{thm:flag_A} $\gamma(\Ac_{\widehat{C_n}},x)$ is the $f$-polynomial of a flag simplicial complex.
In fact, the line graph of a cycle is isomorphic to itself. Hence $\gamma(\Ac_{\widehat{C_n}},x)$ is the $f$-polynomial of the clique complex of $\overline{C_n[K_2]}$.
For example, if $n=3$, then $\overline{C_3[K_2]}=\overline{K_3[K_2]}=\overline{K_6}$ is the empty graph with $6$ vertices and
$\gamma(\Ac_{\widehat{C_3}},x)=g(C_3, 2x)=1+6x$
coincides with the $f$-polynomial of its clique complex.
\end{example}

In the rest of this section, we show that for any bipartite graph $G$ such that every cycle of length $\geq 6$ in $G$ has a chord, 
$\partial\Ac_{\widehat{G}}$ has a flag unimodular triangulation. 
In particular, for any forest $G$, 
$\partial\Ac_{\widehat{G}}$ has a flag unimodular triangulation. 
First, we introduce the theory of Gr\"obner bases of toric ideals.
(See, e.g., \cite[Chapter~3]{binomialideals} for details on toric ideals and Gr\"obner bases.)
Let $\Pc \subset \RR^n$ be a lattice polytope,
where $\Pc \cap \ZZ^n =\{\ab_1,\ldots,\ab_m\}$.
For simplicity, we assume that $\Pc$ is {\em spanning}, i.e.,
$\ZZ^{n+1} = \sum_{i=1}^m \ZZ (\ab_i,1)$. 
Note that for any graph $G$, $\Ac_G$ is unimodularly equivalent to a full-dimensional lattice polytope satisfying this condition.
Let 
$$\Rc=K[t_1, t_1^{-1}, \dots, t_n, t_n^{-1} ,s ]$$
be the Laurent polynomial ring over a field $K$
and let 
$$\Sc=K[x_1, \dots, x_m]$$
be the polynomial ring over $K$.
We define the ring homomorphism $ \pi : \Sc \rightarrow \Rc$ by setting $\pi(x_i) = 
t_1^{a_{i 1}} \cdots t_n^{a_{i n}}  s$ where $\ab_i=(a_{i 1},\ldots,a_{i n})$.
The {\em toric ideal} $I_{\Pc}$ of $\Pc$ is the kernel of $\pi$.
It is known that $I_{\Pc}$ is generated by homogeneous binomials.
Given a monomial order $<$, the {\em initial ideal}  ${\rm in}_<(I_{\Pc})$ of $I_{\Pc}$  with respect to $<$
is an ideal generated by the initial monomials ${\rm in}_<(f)$ of nonzero polynomials $f$ in $I_{\Pc}$.
The {\em initial complex} $\Delta(\Pc,<)$ of $\Pc$ with respect to $<$ is 
\[
\Delta(\Pc,<)=
\left\{
\conv (B) : B \subset \{\ab_1,\ldots,\ab_m\}, \prod_{\ab_i \in B} x_i \notin 
\sqrt{ {\rm in}_<(I_{\Pc}) }
\right\},
\]
where $\sqrt{ {\rm in}_<(I_{\Pc}) }$ is the radical of ${\rm in}_<(I_{\Pc})$.

\begin{proposition}[{\cite[Theorems 4.14 and 4.17]{binomialideals}}]
\label{stu1}
The initial complex $\Delta(\Pc,<)$ is a triangulation of $\Pc$.
Moreover, $\Delta(\Pc,<)$ is flag unimodular if and only if 
${\rm in}_<(I_{\Pc})$ is generated by squarefree quadratic monomials.
\end{proposition}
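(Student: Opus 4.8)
The plan is to recover this classical statement about Gr\"obner degenerations of toric ideals by identifying the initial complex $\Delta(\Pc,<)$ with a regular triangulation of $\Pc$. First I would represent the monomial order $<$, restricted to a finite Gr\"obner basis of $I_\Pc$, by a generic weight vector $\omega=(\omega_1,\dots,\omega_m)\in\RR^m_{>0}$, so that ${\rm in}_<(I_\Pc)={\rm in}_\omega(I_\Pc)$. Lifting each lattice point $\ab_i$ to $(\ab_i,\omega_i)\in\RR^{n+1}$ and projecting the lower faces of $\conv\{(\ab_i,\omega_i)\}$ back to $\RR^n$ produces the regular subdivision $\Delta_\omega$ of $\Pc$; genericity of $\omega$ forces every cell of $\Delta_\omega$ to be a simplex, so $\Delta_\omega$ is a triangulation.

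The core of the argument is to show $\Delta(\Pc,<)=\Delta_\omega$ as simplicial complexes on the vertex set $\{\ab_1,\dots,\ab_m\}$. For this I would use the standard dictionary between affine relations among the $\ab_i$ and binomials of $I_\Pc$: a subset $B$ fails to be a face of $\Delta_\omega$ exactly when some lattice point of $\conv(B)$ is pushed strictly above the lifted simplex on $B$, and linear programming duality converts this into the existence of a binomial $x^u-x^v\in I_\Pc$ with ${\rm in}_<(x^u-x^v)=x^u$ and ${\rm supp}(u)\subseteq\{\,i:\ab_i\in B\,\}$. The last condition is precisely the statement that $\prod_{\ab_i\in B}x_i\in\sqrt{{\rm in}_<(I_\Pc)}$. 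Hence $\sqrt{{\rm in}_<(I_\Pc)}$ is the Stanley--Reisner ideal of $\Delta_\omega$, and since $\Delta(\Pc,<)$ is by definition the complex cut out by this radical monomial ideal, we get $\Delta(\Pc,<)=\Delta_\omega$, proving the first assertion.

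For the second assertion I would combine two well-known equivalences. Comparing the total normalized volume of the maximal simplices of $\Delta(\Pc,<)$ with $\deg(\Sc/{\rm in}_<(I_\Pc))=\deg(\Sc/I_\Pc)$, which equals the normalized volume of $\Pc$, shows that $\Delta(\Pc,<)$ is a unimodular triangulation if and only if ${\rm in}_<(I_\Pc)$ is already a squarefree monomial ideal, equivalently ${\rm in}_<(I_\Pc)=\sqrt{{\rm in}_<(I_\Pc)}$ is the Stanley--Reisner ideal of $\Delta(\Pc,<)$. Since a simplicial complex is flag if and only if its Stanley--Reisner ideal is generated in degree $2$, it follows that $\Delta(\Pc,<)$ is flag and unimodular if and only if ${\rm in}_<(I_\Pc)$ is squarefree and generated in degree $2$, that is, generated by squarefree quadratic monomials.

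I expect the main obstacle to be the ``covering'' half of the identification $\Delta(\Pc,<)=\Delta_\omega$: the nonface dictionary alone only pins down which subsets are faces, and one still has to verify that the resulting complex tiles all of $\Pc$ with simplices meeting along common faces. This needs either a direct convexity analysis of the lower hull of $\conv\{(\ab_i,\omega_i)\}$ or the multiplicity argument relating $\deg(\Sc/\sqrt{{\rm in}_<(I_\Pc)})$ to the normalized volume of $\Pc$, together with purity of $\Delta(\Pc,<)$. A minor technical point is the passage from the abstract term order $<$ to the weight vector $\omega$, which is harmless because only a finite Gr\"obner basis needs to be ordered consistently.
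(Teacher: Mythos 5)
The paper itself gives no argument for this proposition: it is imported verbatim from the literature (Theorems 4.14 and 4.17 of the book \emph{Binomial ideals}), so your proposal has to be judged against the standard Sturmfels-style proof, which is indeed the route you follow (weight vector $\omega$ representing $<$, identification of $\Delta(\Pc,<)$ with the regular triangulation $\Delta_\omega$ via the nonface dictionary, then a volume/degree count). The first assertion and the implication ``${\rm in}_<(I_{\Pc})$ generated by squarefree quadratics $\Rightarrow$ flag unimodular'' are sketched along the correct lines, with the covering step of $\Delta(\Pc,<)=\Delta_\omega$ honestly flagged as incomplete.

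There is, however, a genuine gap in the converse direction of the second assertion. If $\Delta(\Pc,<)$ is flag and unimodular, flagness only tells you that $\sqrt{{\rm in}_<(I_{\Pc})}$ is generated by squarefree quadratics; the substantive point is that unimodularity forces ${\rm in}_<(I_{\Pc})$ to be \emph{radical}. Your justification for this is the equality $\deg(\Sc/{\rm in}_<(I_{\Pc}))=\deg(\Sc/\sqrt{{\rm in}_<(I_{\Pc})})$ coming from the volume count, but equality of degrees does not imply radicality: the degree records only the multiplicities of the top-dimensional minimal primes, and a monomial ideal may have embedded or lower-dimensional components invisible to it (already $(x^2,xy)$ has the same degree as its radical $(x)$ in $K[x,y]$). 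The standard repair is to compare Hilbert functions in \emph{every} degree rather than leading coefficients: $\Sc/{\rm in}_<(I_{\Pc})$ and $\Sc/I_{\Pc}$ have the same Hilbert function, whose value at $t$ counts degree-$t$ semigroup elements and is therefore at most $|t\Pc\cap\ZZ^n|$; on the other hand, when $\Delta(\Pc,<)$ is unimodular the Stanley--Reisner ring $\Sc/\sqrt{{\rm in}_<(I_{\Pc})}$ has Hilbert function exactly $t\mapsto|t\Pc\cap\ZZ^n|$ (count lattice points of $t\Pc$ cell by cell through relative interiors of unimodular faces). Since ${\rm in}_<(I_{\Pc})\subseteq\sqrt{{\rm in}_<(I_{\Pc})}$ gives a graded surjection $\Sc/{\rm in}_<(I_{\Pc})\twoheadrightarrow\Sc/\sqrt{{\rm in}_<(I_{\Pc})}$, these inequalities collapse to equalities in every degree, whence ${\rm in}_<(I_{\Pc})=\sqrt{{\rm in}_<(I_{\Pc})}$ and the claimed generation by squarefree quadratics follows. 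Without this (or an equivalent standard-pair/multiplicity analysis that also excludes embedded components), the step ``unimodular $\Rightarrow$ squarefree'' does not go through as written.
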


We have the following proposition from a fact {\cite[Proposition~8.6]{Stu}} 
on the initial complex with respect to a reverse lexicographic order.

\begin{proposition}
\label{stu2}
Suppose that $\ab_1={\bf 0}$ is the unique lattice point
in the interior of $\Pc$.
Let $<$ be a reverse lexicographic order such that the smallest variable
is $x_1$.
Then ${\bf 0}$ is a vertex of every maximal simplex in $\Delta(\Pc,<)$,
and
\[
\Delta=
\left\{
\conv (B) : B \subset \{\ab_2,\ldots,\ab_m\}, \prod_{\ab_i \in B} x_i \notin 
\sqrt{ {\rm in}_<(I_{\Pc}) }
\right\}
\]
is a triangulation of the boundary $\partial \Pc$ of $\Pc$.
In particular, if $\Delta(\Pc,<)$ is flag and unimodular, then so is $\Delta$.
\end{proposition}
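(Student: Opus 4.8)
The plan is to deduce the proposition from the standard behaviour of reverse lexicographic initial ideals (this is essentially \cite[Proposition~8.6]{Stu}) and then to translate the conclusion into a statement about the boundary complex. The first step is to show that $\ab_1=\mathbf{0}$ lies in every maximal simplex of $\Delta(\Pc,<)$. The key tool is the elementary property of a reverse lexicographic order whose smallest variable is $x_1$: if $f\in\Sc$ is homogeneous and at least one term of $f$ is not divisible by $x_1$, then ${\rm in}_<(f)$ is not divisible by $x_1$ (among monomials of a fixed degree, $<$ prefers the one with the smaller $x_1$-exponent). Using this together with the facts that $I_\Pc$ is prime and $x_1\notin I_\Pc$, I would prove: for a squarefree monomial $v$ divisible by $x_1$, if $v\in\sqrt{{\rm in}_<(I_\Pc)}$ then $v/x_1\in\sqrt{{\rm in}_<(I_\Pc)}$. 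Indeed, $v\in\sqrt{{\rm in}_<(I_\Pc)}$ gives $f\in I_\Pc$ with ${\rm in}_<(f)=v^N$ for some $N$; since $v^N$ is divisible by $x_1$, the property above forces every term of $f$ to be divisible by $x_1$, so $f=x_1 f'$ with $f'\in I_\Pc$ by primeness, and iterating $N$ times peels off all powers of $x_1$ and yields $(v/x_1)^N\in{\rm in}_<(I_\Pc)$. (The case $v=x_1$ would force $1\in{\rm in}_<(I_\Pc)$, which is impossible.) Consequently no minimal non-face of $\Delta(\Pc,<)$ contains $\ab_1$, so $\ab_1$ can be adjoined to any face; since every maximal simplex already has dimension $d$, it must contain $\ab_1$. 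Note that this step does not use that $\mathbf{0}$ is interior.

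Next I would use the hypothesis $\mathbf{0}\in{\rm relint}(\Pc)$ to pass to the boundary. By the previous step, every maximal simplex of $\Delta(\Pc,<)$ has the form $\sigma=\conv(\{\mathbf{0}\}\cup B_\sigma)$ with $|B_\sigma|=d$; let $F_\sigma=\conv(B_\sigma)$ be the facet of $\sigma$ opposite $\mathbf{0}$. If $F_\sigma$ were shared with another maximal simplex $\sigma'$, then $\sigma'$ would also contain $\mathbf{0}$ and all of $B_\sigma$, forcing $\sigma=\sigma'$; hence $F_\sigma$ is not an interior facet of the triangulation and so lies in $\partial\Pc$. On the other hand, each remaining facet of $\sigma$ contains $\mathbf{0}$ and therefore cannot lie in $\partial\Pc$. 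Thus the facets of the maximal simplices that lie on $\partial\Pc$ are exactly the $F_\sigma$; they cover $\partial\Pc$ and, being facets of a triangulation of $\Pc$, they fit together into a triangulation of $\partial\Pc$. Since the faces of $\Delta(\Pc,<)$ contained in some $F_\sigma$ are precisely those with vertices among $\ab_2,\dots,\ab_m$, this triangulation is exactly $\Delta$.

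Finally I would check that flagness and unimodularity descend to $\Delta$. The complex $\Delta$ is the subcomplex of $\Delta(\Pc,<)$ induced on the vertex set obtained by deleting $\ab_1$, and the minimal non-faces of an induced subcomplex are exactly the minimal non-faces of the ambient complex lying in the smaller vertex set; so if $\Delta(\Pc,<)$ is flag, so is $\Delta$. For unimodularity, each maximal simplex of $\Delta$ is a facet $F_\sigma$ of a unimodular simplex $\sigma=\conv(\{\mathbf{0}\}\cup B_\sigma)$; unimodularity of $\sigma$ means $B_\sigma$ is a $\ZZ$-basis of $\ZZ^n$, and a one-line computation then shows that for any $b_0\in B_\sigma$ the vectors $\{b-b_0:b\in B_\sigma\setminus\{b_0\}\}$ form a $\ZZ$-basis of $(\ZZ^n\cap{\rm aff}(F_\sigma))-b_0$; since ${\rm aff}(F_\sigma)$ coincides with the affine hull of the facet of $\Pc$ containing $F_\sigma$, this says $F_\sigma$ is unimodular with respect to the lattice it inherits from that facet.

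The main obstacle is the first step — cleanly pushing the reverse-lex property through the radical $\sqrt{{\rm in}_<(I_\Pc)}$ by the peeling-off-powers-of-$x_1$ argument, using primeness of the toric ideal. Everything afterwards is the geometrically transparent observation that a triangulation of a polytope in which a fixed interior point is a vertex of every maximal cell is the cone, from that point, over an induced triangulation of the boundary.
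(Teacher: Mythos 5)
Your argument is correct, but it is organized differently from the paper: the paper gives no self-contained proof at all, deducing the whole statement from Sturmfels' result that a reverse lexicographic initial complex is the pulling (cone) triangulation at the point $\ab_1$ corresponding to the smallest variable \cite[Proposition~8.6]{Stu}, and leaving the passage to $\partial\Pc$ and the descent of flagness and unimodularity implicit. Your first step in effect reproves the cited fact from scratch: the observation that, for a graded reverse lexicographic order with smallest variable $x_1$, the initial term of a homogeneous polynomial is divisible by $x_1$ only if every term is, combined with primeness of $I_{\Pc}$ and $x_1\notin I_{\Pc}$ to peel powers of $x_1$ off an element with initial term $v^N$, correctly yields that $v/x_1\in\sqrt{{\rm in}_<(I_{\Pc})}$ whenever $v$ does, hence that no minimal non-face contains $\ab_1$ and every maximal cell does. (Two small points of hygiene: since ``reverse lexicographic'' must be taken degreewise to be a term order, you should pass to a homogeneous element of $I_{\Pc}$ with initial term $v^N$, which is harmless because $I_{\Pc}$ is homogeneous; and when you assert that the facets $F_\sigma$ opposite $\mathbf{0}$ cover $\partial\Pc$, add the one-line reason: a boundary point lying in a maximal cell $\sigma$ must have vanishing barycentric coordinate at $\mathbf{0}$, since otherwise it would be a proper convex combination of the interior point $\mathbf{0}$ with a point of $\Pc$ and hence interior.) Your second and third steps — that the cone structure forces the induced subcomplex on $\{\ab_2,\dots,\ab_m\}$ to triangulate $\partial\Pc$, that minimal non-faces of an induced subcomplex are minimal non-faces of the ambient complex, and that a facet of a unimodular simplex with vertex $\mathbf{0}$ is unimodular in the lattice of its affine hull — are exactly the details the paper takes for granted, and they are carried out correctly. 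What your route buys is a proof independent of the pulling-triangulation machinery; what the paper's route buys is brevity, since \cite[Proposition~8.6]{Stu} packages your first (and hardest) step.
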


Given a graph $G$ on the vertex set $[n]$ and the edge set $E(G)=\{e_1 ,\dots, e_m\}$,
let 
$$\Rc=K[t_1, t_1^{-1}, \dots, t_n, t_n^{-1} ,s ]$$
be the Laurent polynomial ring over a field $K$
and let 
$$\Sc=K[x_1, \dots, x_m, y_1, \dots, y_m, z]$$
be the polynomial ring over $K$.
We define the ring homomorphism $ \pi : \Sc \rightarrow \Rc$ by setting $\pi(z) = s$, 
$\pi(x_k) = t_i t_j^{-1} s$ and $\pi(y_k) = t_i^{-1} t_j s$
if $e_k = \{i,j\} \in E(G)$ and $i < j$.
Then the toric ideal $I_{\Ac_G}$ of $\Ac_G$ is the kernel of $\pi$.
The initial ideal of $I_{\Ac_G}$ plays an important role in, e.g., 
\cite{DDM, HJMsymmetric, OS, OTlocally}.
In particular, a Gr\"obner basis of $I_{\Ac_G}$ of a graph $G$
with respect to a certain 
reverse lexicographic order is given in \cite[Proposition 3.8]{HJMsymmetric}.

\begin{theorem}
\label{flagtri}
Let $G$ be a bipartite graph such that every cycle of length $\ge 6$ in $G$ has a chord.
Then there exists a reverse lexicographic order $<$ such that 
\begin{itemize}
\item[{\rm (i)}]
$z$ is the smallest variable with respect to $<${\rm ;}
\item[{\rm (ii)}]
${\rm in}_< (I_{\Ac_{\widehat{G}}})$ is generated by squarefree quadratic monomials.
\end{itemize}
In particular, $\partial\Ac_{\widehat{G}}$ has a flag unimodular triangulation.
\end{theorem}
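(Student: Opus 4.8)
The plan is to obtain the order and the Gr\"obner basis by applying the known Gr\"obner basis computation for symmetric edge polytopes to the graph $\widehat{G}$ itself, and then to check that, under the hypothesis on $G$, none of the basis elements has degree $\ge 3$; the triangulation statements then follow formally from Propositions~\ref{stu1} and \ref{stu2}.

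First I would set up the toric ideal $I_{\Ac_{\widehat{G}}}\subset\Sc=K[x_e,y_e\ (e\in E(\widehat{G})),z]$ as above, after replacing $\Ac_{\widehat{G}}$ by a unimodularly equivalent full-dimensional spanning polytope; recall that $\Ac_{\widehat{G}}$ is reflexive, so the origin $\mathbf{0}$ is its unique interior lattice point and corresponds to the variable $z$. By \cite[Proposition 3.8]{HJMsymmetric} there is a reverse lexicographic order, which I would arrange so that $z$ is its smallest variable (this is compatible with that construction and is exactly condition (i)), admitting an explicit reduced Gr\"obner basis $\Gc$ of $I_{\Ac_{\widehat{G}}}$ whose elements are squarefree quadratic binomials together with binomials of degree $\ge 3$ whose leading monomials are the ``cycle monomials'' of chordless cycles of $\widehat{G}$ of length $\ge 5$. (A chordless $3$- or $4$-cycle contributes only quadratic elements: for instance the length-$3$ binomial $x_{ij}x_{jk}y_{ik}-z^{3}$ has leading monomial divisible by the leading monomial $x_{ij}x_{jk}$ of the quadratic binomial $x_{ij}x_{jk}-x_{ik}z$, and the ``go around'' binomial of a chordless $4$-cycle reduces to quadratic binomials in the same manner, using the edges through the universal apex vertex $n+1$.) Hence, once I show that $\widehat{G}$ has no chordless cycle of length $\ge 5$, every element of $\Gc$ is a squarefree quadratic binomial, which gives condition (ii).

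The combinatorial core is therefore the claim that \emph{every cycle of $\widehat{G}$ of length $\ge 5$ has a chord.} Let $C$ be such a cycle. If $C$ avoids the apex vertex $n+1$, then $C$ is a cycle of $G$; since $G$ is bipartite, $|C|$ is even, so $|C|\ge 6$, and by hypothesis $C$ has a chord, which is also a chord of $C$ in $\widehat{G}$. If $C$ passes through $n+1$, write $C=(n+1,v_1,\dots,v_k,n+1)$ with $k=|C|-1\ge 4$. Then $v_2$ is an internal vertex of $C$ with $v_2\notin\{v_1,v_k\}$, the edge $\{n+1,v_2\}$ lies in $E(\widehat{G})$ because $n+1$ is adjacent to every vertex of $[n]$, and $\{n+1,v_2\}$ is not an edge of $C$, since the only edges of $C$ at $n+1$ are $\{n+1,v_1\}$ and $\{n+1,v_k\}$. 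Thus $\{n+1,v_2\}$ is a chord of $C$, proving the claim. (In fact the only chordless cycles of $\widehat{G}$ of length $\ge 4$ are the chordless $4$-cycles of $G$.)

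Combining the two steps, no element of $\Gc$ has degree $\ge 3$, so ${\rm in}_<(I_{\Ac_{\widehat{G}}})$ is generated by squarefree quadratic monomials, establishing (ii). Proposition~\ref{stu1} then shows that $\Delta(\Ac_{\widehat{G}},<)$ is a flag unimodular triangulation of $\Ac_{\widehat{G}}$, and since $z$ corresponds to $\mathbf{0}$ and is the smallest variable, Proposition~\ref{stu2} shows that the induced triangulation of $\partial\Ac_{\widehat{G}}$ is again flag and unimodular, which is the final assertion; in particular it applies to every forest $G$, a forest being a bipartite graph with no cycles at all. The step I expect to require the most care is the first one: pinning down, from \cite[Proposition 3.8]{HJMsymmetric}, the exact shape of the reduced Gr\"obner basis for this particular graph $\widehat{G}$ and a reverse lexicographic order with $z$ smallest, and verifying that its only elements of degree $\ge 3$ are those attached to chordless cycles of length $\ge 5$ --- equivalently, that the apex vertex forces the ``go around'' binomial of a chordless $4$-cycle of $G$ to reduce to quadratics, and that a chord of a longer even cycle yields a quadratic binomial with the expected leading monomial. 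Once that book-keeping is settled, the combinatorial claim above does the real work.
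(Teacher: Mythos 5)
Your final reduction to Propositions \ref{stu1} and \ref{stu2} is fine, and your combinatorial claim is correct and cleanly argued: a cycle of $\widehat{G}$ of length at least $5$ avoiding the apex is an even cycle of $G$ of length at least $6$, hence chorded by hypothesis, while a cycle through the apex of length at least $5$ acquires a chord at the apex. The gap is in the step you yourself flag as delicate, and it is not a matter of bookkeeping that can be deferred: it is the entire content of the theorem. \cite[Proposition 3.8]{HJMsymmetric} provides, for a suitable reverse lexicographic order with $z$ smallest, a Gr\"obner basis of $I_{\Ac_H}$ for an arbitrary graph $H$ whose elements are binomials attached to \emph{all} even and odd cycles of $H$ (with initial monomials of degree roughly half the cycle length), together with the quadrics $x_ey_e-z^2$; it does not describe the reduced Gr\"obner basis, and it does not assert that the degree $\ge 3$ elements coming from chorded cycles are redundant, i.e.\ that their initial monomials are divisible by quadratic initial monomials. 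Whether that redundancy holds depends on which side of each cycle binomial is the initial term, hence on the precise order and on how the $x/y$-orientation pattern of a long cycle restricts to the shorter cycles cut off by a chord. Your own parenthetical reductions illustrate the problem: for a chordless $4$-cycle $1\hbox{-}2\hbox{-}3\hbox{-}4$ of $G$ the quadrics you need, such as $x_{12}x_{23}-x_{1,n+1}y_{3,n+1}$, only help if the order makes the left-hand side the initial term, and you never choose an order guaranteeing this. Establishing such a statement for the non-bipartite graph $\widehat{G}$ (which is full of triangles) is precisely the kind of order-specific argument that constitutes \cite[Theorem 4.4]{CSC} in the bipartite case, and that theorem cannot be applied to $\widehat{G}$ directly.

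For comparison, the paper avoids working with $\widehat{G}$ directly: it applies \cite[Theorem~4.4]{CSC} to the \emph{bipartite} graph $\widetilde{G}$ (checking that the chordality hypothesis passes from $G$ to $\widetilde{G}$, since the two new vertices are adjacent to whole sides of the bipartition), and then transfers the squarefree quadratic initial ideal from $I_{\Ac_{\widetilde{G}}}$ to $I_{\Ac_{\widehat{G}}}$ via the contraction of the edge $\{n+1,n+2\}$, using the fact established in the proof of \cite[Proposition 5.4]{OTlocally}. To complete your route you would have to prove, not cite, a chordality criterion for quadratic Gr\"obner bases of $I_{\Ac_H}$ valid for non-bipartite $H$ (or at least for suspensions), by exhibiting an explicit order and carrying out the reduction of the chorded-cycle binomials; otherwise the argument should be rerouted through $\widetilde{G}$ as in the paper.
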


\begin{proof}
Let $[n]$ be the vertex set of $G$.
Recall that  $\widetilde{G}$ is a bipartite graph on $[n+2]$
whose edge set is 
\[
E(\widetilde{G}) = E(G) \cup \{ \{i, n+1\}  : i \in V_1\} \cup \{ \{j, n+2\}  : j \in V_2 \} \cup \{\{n+1, n+2\}\}.
\]
Since every cycle of length $\ge 6$ in $G$ has a chord,
 every cycle of length $\ge 6$ in $\widetilde{G}$ has a chord.
From \cite[Theorem~4.4]{CSC}, 
there exists a reverse lexicographic order $<$ such that 
$z$ is the smallest variable with respect to $<$, and that
the initial ideal of $I_{\Ac_{\widetilde{G}}}$ with respect to $<$ is generated by squarefree quadratic monomials $\{m_1,\ldots,m_s\}$.
Note that $\widehat{G}$ is obtained from $\widetilde{G}$ by contracting
the edge $\{n+1,n+2\}$, and 
there is a natural correspondence between $E(\widetilde{G}) \setminus \{\{n+1, n+2\}\}$ and $E(\widehat{G})$. 
From a fact shown in the proof of \cite[Proposition 5.4]{OTlocally}, 
the initial ideal of $I_{\Ac_{\widehat{G}}}$ with respect to 
the reverse lexicographic order induced by $<$ is
generated by squarefree quadratic monomials $\{m_1,\ldots,m_s\}\setminus
\{x_k y_k\}$ where $e_k=\{n+1,n+2\}$.
Thus $\partial\Ac_{\widehat{G}}$ has a flag unimodular triangulation
by Propositions \ref{stu1} and \ref{stu2}.
\end{proof}

\section{Symmetric edge polytopes of type B}
In this section, we consider the symmetric edge polytope  of type B of a cactus bipartite graph.
Note that the symmetric edge polytope $\Bc_G$ of a graph $G$ is reflexive if and only if $G$ is bipartite \cite[Theorem 0.1]{OTinterior}.
In the case, a formula of the $\gamma$-polynomial of $h^*(\Bc_G,x)$ in terms of interior polynomials is given.

\begin{theorem}[{\cite[Theorem 0.3]{OTinterior}}]
\label{interiorB}
	Let $G$ be a bipartite graph. Then one has
	\[
	\gamma(\Bc_G,x)=I_{\widetilde{G}}(4x).
	\]
\end{theorem}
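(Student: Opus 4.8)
The plan is to realize $h^*(\Bc_G,x)$ as the $h$-polynomial of a flag triangulation of the sphere $\partial\Bc_G$ and then to identify its $\gamma$-vector with a hypertree count on $\widetilde G$, following the pattern of Theorem~\ref{thm:suspinterior} but with the crucial simplification that, because the generators $\pm\eb_i\pm\eb_j$ of $\Bc_G$ already record the bipartition of $G$, a single interior polynomial suffices and no averaging over cuts is needed.

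First I would establish (or quote from \cite{OTinterior}) that for bipartite $G$ the toric ideal $I_{\Bc_G}$ has, for a suitable reverse lexicographic order in which the variable attached to the origin is smallest, an initial ideal generated by squarefree quadratic monomials; the order and the Gr\"obner basis can be adapted from those used for $I_{\Ac_{\widetilde G}}$ in \cite{CSC,HJMsymmetric}, just as in the proof of Theorem~\ref{flagtri}. By Propositions~\ref{stu1} and~\ref{stu2} this produces a flag regular unimodular triangulation of $\Bc_G$ all of whose maximal simplices contain $\mathbf 0$, so by the cited result of Bruns--R\"omer \cite{BR} the polynomial $h^*(\Bc_G,x)$ coincides with the $h$-polynomial of the induced flag triangulation $\Delta$ of the sphere $\partial\Bc_G$ (which is $(n-1)$-dimensional, since $\Bc_G$ is full-dimensional in $\RR^n$), and $\gamma(\Bc_G,x)$ is precisely the $\gamma$-polynomial of that $h$-polynomial.

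Next comes the dictionary with interior polynomials. The bipartite graph $\widetilde G$ on $[n+2]$ is constructed exactly so that on the $V_1$-side the generator $\eb_i$ is traded for the edge $\{i,n+1\}$, on the $V_2$-side $\eb_j$ is traded for $\{j,n+2\}$, and $\{n+1,n+2\}$ absorbs the remaining sign ambiguity. I would relate the triangulation $\Delta$ to K\'alm\'an's hypertree theory \cite{interior}: the maximal faces of a suitable shelling of $\Delta$ --- equivalently, the standard monomials of $I_{\Bc_G}$ not divisible by the origin-variable --- should be in bijection with the hypertrees in ${\rm HT}(\widetilde G)$ in such a way that the defining statistic of the $h$-polynomial matches $\overline\iota$ up to the renormalization forced by central symmetry. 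Concretely: since $\Bc_G=-\Bc_G$ we may take $\Delta$ to be $\pm$-invariant, and since $\mathbf 0$ is the only interior lattice point no face is $\pm$-fixed; pairing each face with its antipode and isolating the $(1+x)^n$-part in the $\gamma$-expansion converts each internally inactive hyperedge of a hypertree into a factor $4$ --- the same ``$4$'' that already appears in Theorem~\ref{thm:suspinterior}. This yields $\gamma(\Bc_G,x)=\sum_{{\bf f}\in{\rm HT}(\widetilde G)}(4x)^{\overline\iota({\bf f})}=I_{\widetilde G}(4x)$. Equivalently, one may first prove the combinatorial identity $\gamma(\Bc_G,x)=\sum_k|M(G,k)|(4x)^k$ --- each matched vertex set of size $2k$ contributing $4^k$ to $\gamma_k$ --- and then convert to $I_{\widetilde G}(4x)$ by Proposition~\ref{prop:matchinginterior}.

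The main obstacle is the middle step: making the correspondence between the faces of $\Delta$ (or the standard monomials of $I_{\Bc_G}$) and the hypertrees of $\widetilde G$ fully explicit, and transporting the internal-activity statistic so cleanly that the passage from the boundary $h$-polynomial to the $\gamma$-polynomial produces exactly $4^{\overline\iota}$ with no leftover terms. In practice I would do this bookkeeping algebraically: write down the quadratic Gr\"obner basis of $I_{\Bc_G}$ explicitly, enumerate the relevant standard monomials, construct a bijection to ${\rm HT}(\widetilde G)$ under which the cardinality of a simplex is a fixed linear function of $\overline\iota$, and read off the claimed polynomial identity --- checking en route that $I_{\widetilde G}(0)=1$, consistent with $\gamma_0=h^*_0=1$, and that $\deg I_{\widetilde G}\le\lfloor n/2\rfloor$, consistent with the degree bound on a $\gamma$-polynomial of a reflexive polytope of dimension $n$.
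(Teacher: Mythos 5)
First, note that the paper you are working from does not prove this statement at all: Theorem \ref{interiorB} is quoted verbatim from \cite[Theorem 0.3]{OTinterior}, so there is no in-paper argument to match, and your proposal has to stand on its own. Its first step is fine and is indeed how one would start: for bipartite $G$ the toric ideal of $\Bc_G$ has a squarefree quadratic initial ideal for a suitable reverse lexicographic order, so Propositions \ref{stu1} and \ref{stu2} together with \cite{BR} identify $h^*(\Bc_G,x)$ with the $h$-polynomial of a flag unimodular triangulation of $\partial\Bc_G$, and $\gamma(\Bc_G,x)$ with its $\gamma$-polynomial.

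The genuine gap is the second half, which is where the entire content of the theorem lies. Your mechanism for producing the factor $4$ --- ``pairing each face with its antipode and isolating the $(1+x)^n$-part converts each internally inactive hyperedge into a factor $4$'' --- is an assertion, not an argument: there is no general principle by which antipodal pairing of faces of a centrally symmetric triangulation yields the $\gamma$-expansion, and the statistic $\overline{\iota}$ is defined on spanning trees of $\widetilde{G}$, not on faces of a triangulation of $\partial\Bc_G$, so a concrete weight-preserving correspondence has to be constructed before any such bookkeeping can even begin; you yourself label this the ``main obstacle'' and defer it to unspecified enumeration of standard monomials. The fallback you offer is circular: the identity $\gamma(\Bc_G,x)=\sum_k |M(G,k)|(4x)^k$ is, by Proposition \ref{prop:matchinginterior}, literally a restatement of the theorem, so claiming ``each matched vertex set of size $2k$ contributes $4^k$ to $\gamma_k$'' assumes what is to be proved. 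For comparison, the cited proof in \cite{OTinterior} establishes the equivalent identity $h^*(\Bc_G,x)=(1+x)^n I_{\widetilde{G}}\bigl(4x/(1+x)^2\bigr)$ by an explicit analysis tying a unimodular triangulation of $\Bc_G$ to the root-polytope/interior-polynomial machinery of K\'alm\'an \cite{interior} (where $h^*$ of the root polytope of a connected bipartite graph is the interior polynomial), tracking precisely how the four sign choices $\pm\eb_i\pm\eb_j$ over each edge produce the factors $4^k$ and $(1+x)^n$; also note that, unlike what your opening suggests, the averaging over cuts in Theorem \ref{thm:suspinterior} is a feature of the type-A suspension formula and its absence here is not by itself an argument. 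Until the face--hypertree correspondence and the transport of $\overline{\iota}$ are made explicit, the proposal is a plan rather than a proof.
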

Similarly to Theorem~\ref{formulaA}, for a cactus bipartite graph $G$, we give a formula of $\gamma(\Bc_G,x)$ in terms of matching generating polynomials and show that $h^*(\Bc_G,x)$ is real-rooted.

\begin{theorem}
\label{thm:cactusB}
Let $G$ be a cactus bipartite graph.
Then one has  
$$
\gamma({\mathcal B}_G,x) =
g(G,4x) + \sum_{R \in {{\mathcal R}_2'}(G)} (-1)^{c(R)} g (G-R , 4x) \ 
(4x)^{\frac{|E(R)|}{2} }.
$$
Moreover, $h^*(\Bc_G,x)$ is real-rooted.
\end{theorem}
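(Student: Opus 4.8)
The plan is to follow the same two-step strategy as for Theorems~\ref{formulaA} and~\ref{thm:realrooted}, with Theorem~\ref{interiorB} playing the role of Theorem~\ref{thm:suspinterior}; in fact the type~$B$ situation is slightly easier, since $\gamma(\Bc_G,x)$ is a single interior polynomial rather than an average over the cuts of $G$.

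First I would derive the formula. Because $G$ is bipartite, Theorem~\ref{interiorB} gives $\gamma(\Bc_G,x)=I_{\widetilde G}(4x)$, and Proposition~\ref{prop:matchinginterior} rewrites the right-hand side as $\sum_{k\ge 0}|M(G,k)|\,(4x)^k$. Since a cactus graph has the property that every edge lies in at most one cycle, hence in at most one even cycle, Lemma~\ref{klemma} applies to $G$ and expresses $|M(G,k)|$ in terms of the $m_j$'s of $G$ and of the graphs $G-R$ with $R\in\Rc_2'(G)$. Substituting this into the sum and reindexing each inner sum by $j=k-|E(R)|/2$ — the same routine manipulation as in the proof of Theorem~\ref{formulaAgen} — I expect to obtain
\[
\gamma(\Bc_G,x)=g(G,4x)+\sum_{R\in\Rc_2'(G)}(-1)^{c(R)}\,g(G-R,4x)\,(4x)^{|E(R)|/2}.
\]
Here $\Rc_2'(G)=\Rc_2(G)$ because every cycle of a bipartite graph is even.

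Next I would prove real-rootedness, mimicking the proof of Theorem~\ref{thm:realrooted}. Denote by $P(x)$ the right-hand side above and set $n=|V(G)|$. Using $\alpha(H,x)=x^{|V(H)|}g(H,-x^{-2})$, together with the observation that a union $R$ of vertex-disjoint cycles satisfies $|V(R)|=|E(R)|$ (so $G-R$ has $n-|E(R)|$ vertices), I would check that
\[
x^{n}\,P\!\left(-\frac{1}{4x^{2}}\right)=\alpha(G,x)+\sum_{R\in\Rc_2'(G)}(-1)^{c(R)}(-1)^{|E(R)|/2}\,\alpha(G-R,x),
\]
and that this agrees with the formula for $\mu(G,\mathbf{t},x)$ recalled before Proposition~\ref{cactusreal} when one takes $t_i=\tfrac12(-1)^{|E(C_i)|/2}$ for each cycle $C_i$ of $G$ (all cycles of $G$ being even, since $G$ is bipartite). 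Since $|t_i|=\tfrac12\le 1$ and $G$ is a cactus graph, Proposition~\ref{cactusreal} then gives that $\mu(G,\mathbf{t},x)$ is real-rooted. Because $P(0)=m_0(G)=1\ne 0$, any root $u$ of $P$ is nonzero; choosing $x_0$ with $-\tfrac{1}{4x_0^{2}}=u$ forces $\mu(G,\mathbf{t},x_0)=0$, hence $x_0\in\RR$, hence $u=-\tfrac{1}{4x_0^{2}}\in\RR$ (and in fact $u<0$). Thus $\gamma(\Bc_G,x)=P(x)$ is real-rooted.

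Finally I would conclude the statement about $h^*(\Bc_G,x)$: since $G$ is bipartite, $\Bc_G$ is reflexive, so $h^*(\Bc_G,x)$ is palindromic, and its $\gamma$-polynomial equals $I_{\widetilde G}(4x)$, which has nonnegative coefficients by the definition of interior polynomials; hence $h^*(\Bc_G,x)$ is $\gamma$-positive. As a $\gamma$-positive polynomial is real-rooted exactly when its $\gamma$-polynomial is (\cite[Observation~4.2]{EulerianNumbers}), the real-rootedness of $\gamma(\Bc_G,x)$ proved above yields that $h^*(\Bc_G,x)$ is real-rooted. I expect the only delicate point to be the bookkeeping in the displayed substitution identity — matching the signs $(-1)^{c(R)}(-1)^{|E(R)|/2}$ against $(-2)^{c(R)}\prod_{C_i\subset R}t_i$ and tracking the vertex counts via $|V(R)|=|E(R)|$ — but conceptually this proof needs nothing beyond the type~$A$ arguments, so I do not foresee a genuine obstacle.
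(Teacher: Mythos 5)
Your proposal is correct and follows essentially the same route as the paper: the formula comes from combining Theorem~\ref{interiorB}, Proposition~\ref{prop:matchinginterior} and Lemma~\ref{klemma}, and real-rootedness is obtained by the substitution $x^n\gamma(\Bc_G,-1/(4x^2))=\mu(G,\mathbf{t},x)$ with exactly the paper's choice $t_i=(-1)^{|E(C_i)|/2}/2$ and Proposition~\ref{cactusreal}. Your sign and vertex-count bookkeeping checks out, so there is nothing to fix.
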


\begin{proof}
Let $n$ be the number of vertices of $G$.
From Proposition \ref{prop:matchinginterior}, Lemma~\ref{klemma} and Theorem \ref{interiorB},
the $\gamma$-polynomial of ${\mathcal B}_G$ is 
\begin{eqnarray*}
I_{\widetilde{G}} (4x)
&=& \sum_{k \ge 0} |M(G,k)|\  (4x)^k\\
&=&\sum_{k \ge 0}
m_k(G) (4x)^k+\sum_{k \ge 0}
\sum_{R \in {\mathcal R}_2' (G)} (-1)^{c(R)} m_{k-|E(R)|/2} \left(G- R \right)(4x)^k\\
&=& g(G, 4x) + \sum_{R \in {\mathcal R}_2' (G)} (-1)^{c(R)} g(G-R, 4x) (4x)^{\frac{|E(R)|}{2} }.
\end{eqnarray*}
Moreover, $\gamma({\mathcal B}_G, x)$ satisfies
\begin{eqnarray*}
x^n \gamma \left({\mathcal B}_G , -\frac{1}{4x^2} \right)
& = &
x^n g(G,-x^{-2} ) + \sum_{R \in {{\mathcal R}_2'}(G)} (-1)^{c(R)} x^n g (G-R , -x^{-2}) \ 
\left(-\frac{1}{x^2}\right)^{\frac{|E(R)|}{2} }\\
& = &
\alpha(G,x) +  \sum_{R \in {{\mathcal R}_2'}(G)} (-2)^{c(R)} \alpha (G-R , x) \ 
\left(\frac{1}{2}\right)^{C(R)}
\prod_{C_i \subset R} (-1)^{\frac{|E(C_i)|}{2} }\\\\
&=&
\mu(G, {\bf t}, x) ,
\end{eqnarray*}
where 
${\bf t}=(t_1,\ldots,t_r)$ with
$t_i= \left(-1\right)^{\frac{|E(C_i)|}{2} } / 2$.
By Proposition~\ref{cactusreal}, this is real-rooted.
Hence $h^*(\Bc_G,x)$ is also real-rooted.
\end{proof}

By using Theorem \ref{thm:cactusB}, for an even cycle, we compute the $\gamma$-polynomial of the $h^*$-polynomial and the normalized volume of the symmetric edge polytope of type B.
\begin{example}
Let $C_{n}$ be an even cycle of length $n$.
It then from Example \ref{wheelexample} that
\[
g(C_n,4x)=L_n(4x)=\frac{
(1+\sqrt{1+16x})^n + (1-\sqrt{1+16x})^n
}{2^n}.
\]
From Theorem \ref{thm:cactusB} one has
\[
\gamma(\Bc_{C_n},x)=
 \frac{
(1+\sqrt{1+16x})^n + (1-\sqrt{1+16x})^n
}{2^n}  - (4x)^{\frac{n}{2} }.
\]
In particular, we obtain
\[
\Vol(\Bc_{C_n})=2^n \gamma(\Bc_{C_n},1/4)=(1+\sqrt{5})^n+(1-\sqrt{5})^n-2^n.
\]
\end{example}

Theorem \ref{thm:cactusB} generalizes the following result.
\begin{corollary}[{\cite[Proposition 3.5]{OTinterior}}]
\label{forest}
	Let $G$ be a forest.
	Then one has
	\[
	\gamma(\Bc_G,x)=g(G,4x).
	\]
	In particular, $h^*(\Bc_G,x)$ is real-rooted.
\end{corollary}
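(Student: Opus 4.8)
The final statement to prove is Corollary \ref{forest}, which asserts that for a forest $G$, one has $\gamma(\Bc_G,x)=g(G,4x)$ and consequently $h^*(\Bc_G,x)$ is real-rooted. The plan is to derive this directly as a special case of Theorem \ref{thm:cactusB}, which has just been established. Indeed, a forest is a bipartite graph (it has no cycles at all, hence in particular no odd cycles), and it is trivially a cactus graph since the cactus condition---each edge lies in at most one cycle---is vacuous when there are no cycles. So Theorem \ref{thm:cactusB} applies to $G$.

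\textbf{Main step.} I would observe that when $G$ is a forest, the set ${\mathcal R}_2'(G)$ of subgraphs consisting of vertex-disjoint even cycles is empty, because $G$ contains no cycles whatsoever. Therefore the sum $\sum_{R \in {\mathcal R}_2'(G)} (-1)^{c(R)} g(G-R,4x)\,(4x)^{|E(R)|/2}$ in the formula of Theorem \ref{thm:cactusB} vanishes, and the formula collapses to $\gamma(\Bc_G,x)=g(G,4x)$. This is the entire content of the first assertion.

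\textbf{Real-rootedness.} For the second assertion, I would recall from Section 3 (the discussion following the definition of $g(G,x)$, using $\alpha(G,x)=x^n g(G,-x^{-2})$ and the real-rootedness of the matching polynomial $\alpha(G,x)$ from \cite[Theorem 5.5.1]{Gut}) that every matching generating polynomial $g(G,x)$ has only real (indeed negative) roots. Since the substitution $x \mapsto 4x$ preserves real-rootedness, $\gamma(\Bc_G,x)=g(G,4x)$ is real-rooted. As $\Bc_G$ is reflexive (a forest is bipartite, so \cite[Theorem 0.1]{OTinterior} applies), $h^*(\Bc_G,x)$ is palindromic and $\gamma$-positive, and by \cite[Observation 4.2]{EulerianNumbers} a $\gamma$-positive polynomial is real-rooted precisely when its $\gamma$-polynomial is. Hence $h^*(\Bc_G,x)$ is real-rooted.

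\textbf{Anticipated obstacle.} There is essentially no obstacle here: the result is a clean specialization, and the only things to check are the two elementary observations that a forest is a cactus bipartite graph and that ${\mathcal R}_2'(G)=\emptyset$ for a forest. Alternatively, one could bypass Theorem \ref{thm:cactusB} entirely and argue directly: for a forest $G$, every cut $E_S$ and the bipartite graph $\widetilde G$ contain no even cycles either, so Proposition \ref{prop:matchinginterior} combined with Lemma \ref{klemma} (whose correction term is empty) gives $I_{\widetilde G}(4x)=\sum_{k\ge 0} m_k(G)(4x)^k=g(G,4x)$, and then Theorem \ref{interiorB} finishes it. I would present the short derivation from Theorem \ref{thm:cactusB} as the primary proof.
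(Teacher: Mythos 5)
Your proposal is correct and matches the paper's treatment: the corollary is presented there precisely as the specialization of Theorem \ref{thm:cactusB} to a forest, where ${\mathcal R}_2'(G)=\emptyset$ kills the correction sum, and real-rootedness follows from the real-rootedness of matching generating polynomials. No gaps.
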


Finally, we show that for a forest $G$, $\gamma (\Bc_G,x)$ coincides with the $f$-polynomial of a flag simplicial complex. Namely, Nevo--Petersen's conjecture holds for any flag unimodular triangulation of the boundary $\partial \Bc_G$ in this case.

\begin{theorem}
\label{thm:NPB}
Let $G$ be a forest.
Then the $\gamma$-polynomial of $h^*({\mathcal B}_G,x)$
 coincides with the $f$-polynomial of a flag simplicial complex.
\end{theorem}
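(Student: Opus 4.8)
The plan is to mirror the proof of Theorem~\ref{thm:flag_A} almost verbatim, since the two statements differ only in which formula for the $\gamma$-polynomial is invoked. First I would apply Corollary~\ref{forest}: for a forest $G$ one has $\gamma(\Bc_G,x)=g(G,4x)$, so the problem is reduced to showing that $g(G,4x)$ is the $f$-polynomial of a flag simplicial complex.

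Next I would invoke Proposition~\ref{prop:matchflag}, which tells us that the matching generating polynomial $g(G,x)$ is itself the $f$-polynomial of a flag simplicial complex (concretely, the clique complex of $\overline{L(G)}$, using the identity $g(G,x)=i(L(G),x)$). Then I would apply Proposition~\ref{fmx} with $m=4$: since $g(G,x)$ is the $f$-polynomial of a flag simplicial complex, so is $g(G,4x)$, realized as the clique complex of the complement of $\overline{L(G)}[K_4]$ via Lemma~\ref{lem:independence}. Combining these gives that $\gamma(\Bc_G,x)=g(G,4x)$ is the $f$-polynomial of a flag simplicial complex, and since the $f$-vector of a flag simplicial complex agrees with that of a balanced one \cite{flagcomplex}, Nevo--Petersen's conjecture holds for any flag unimodular triangulation of $\partial\Bc_G$.

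There is essentially no obstacle here: the theorem is a direct consequence of Corollary~\ref{forest} together with Propositions~\ref{prop:matchflag} and~\ref{fmx}, exactly as in the type~$A$ case. The only point worth remarking is that the argument uses a forest $G$ rather than merely ``$G$ has no even cycles'' because the clean formula $\gamma(\Bc_G,x)=g(G,4x)$ is available (from Corollary~\ref{forest}) precisely for forests; one could expect that Theorem~\ref{thm:cactusB} would allow an extension of Theorem~\ref{thm:NPB} to cactus bipartite graphs whose even cycles contribute, but the alternating signs $(-1)^{c(R)}$ in that formula obstruct a direct ``$f$-polynomial of a flag complex'' interpretation, so I would not attempt that here. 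It would also be natural to add, as in the type~$A$ discussion, the remark that $\partial\Bc_G$ actually admits a flag unimodular triangulation for every forest $G$ (Remark~\ref{rem:B}), so the conjecture is not vacuous in this case.
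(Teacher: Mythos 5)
Your proof is correct and matches the paper's own argument exactly: Corollary~\ref{forest} reduces the statement to showing $g(G,4x)$ is an $f$-polynomial of a flag complex, which follows from Proposition~\ref{prop:matchflag} together with Proposition~\ref{fmx} applied with $m=4$. The extra remarks on the cactus case and on Remark~\ref{rem:B} are accurate but not needed for the proof.
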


\begin{proof}
It follows from Corollary \ref{forest} that the $\gamma$-polynomial of $h^*({\mathcal B}_G,x)$
is $g(G,4x)$.
Thus Propositions~ \ref{prop:matchflag} and \ref{fmx} guarantee that $g(G,4x)$ is the $f$-polynomial of a flag simplicial complex.
\end{proof}

\begin{remark}\label{rem:B}
	It follows from the proof of \cite[Theorem 2.6]{OTinterior} that for any forest $G$, $\partial \Bc_G$ has a flag unimodular triangulation.
\end{remark}



%

\end{document}